\documentclass{amsart}
\usepackage{mathptmx, amssymb, mathtools, enumerate, colonequals, amscd, url, xcolor}
\definecolor{chianti}{rgb}{0.6,0,0}
\definecolor{meretale}{rgb}{0,0,.6}
\definecolor{leaf}{rgb}{0,.35,0}
\usepackage[colorlinks=true, pagebackref, hyperindex, citecolor=meretale, urlcolor=leaf, linkcolor=chianti]{hyperref}

\newtheorem{theorem}{Theorem}[section]
\newtheorem{lemma}[theorem]{Lemma}
\newtheorem{proposition}[theorem]{Proposition}

\theoremstyle{definition}
\numberwithin{equation}{theorem}
\newtheorem{examplex}[theorem]{Example}
\newenvironment{example}{\pushQED{\qed}\examplex}{\popQED\endexamplex}
\newtheorem{remarkx}[theorem]{Remark}
\newenvironment{remark}{\pushQED{\qed}\remarkx}{\popQED\endremarkx}

\newcommand{\segre}{\,\#\,}
\newcommand{\into}{\lhook\joinrel\longrightarrow}

\DeclareMathOperator{\GL}{GL}
\DeclareMathOperator{\SL}{SL}
\DeclareMathOperator{\divv}{div}
\DeclareMathOperator{\Hom}{Hom}
\DeclareMathOperator{\Proj}{Proj}
\DeclareMathOperator{\Spec}{Spec}
\DeclareMathOperator{\tr}{tr}
\DeclareMathOperator{\Hilb}{Hilb}
\DeclareMathOperator{\chr}{char}
\DeclareMathOperator{\rank}{rank}
\DeclareMathOperator{\sign}{sgn}
\DeclareMathOperator{\Pf}{Pf}
\DeclareMathOperator{\diag}{diag}

\newcommand{\fraka}{\mathfrak{a}}
\newcommand{\frakm}{\mathfrak{m}}
\newcommand{\frakp}{\mathfrak{p}}
\newcommand{\frakq}{\mathfrak{q}}

\newcommand{\CC}{\mathbb{C}}
\newcommand{\NN}{\mathbb{N}}
\newcommand{\PP}{\mathbb{P}}
\newcommand{\QQ}{\mathbb{Q}}
\newcommand{\ZZ}{\mathbb{Z}}
\newcommand{\xx}{\pmb{x}}
\renewcommand{\aa}{\pmb{a}}

\newcommand{\calD}{\mathcal{D}}
\newcommand{\calO}{\mathcal{O}}
\newcommand{\sym}{\mathcal{S}}
\newcommand{\alt}{\mathcal{A}}

\renewcommand{\ge}{\geqslant}
\renewcommand{\le}{\leqslant}
\renewcommand{\to}{\longrightarrow}
\renewcommand{\mod}{\ \operatorname{mod}\,}

\newcommand{\floor}[1]{{\left\lfloor #1 \right\rfloor}}
\newcommand{\ceil}[1]{{\left\lceil #1 \right\rceil}}
\newcommand{\md}[1]{{\left\lvert #1 \right\lvert}}
\newcommand{\deff}[1]{\emph{#1}}

\begin{document}
\title{On the $b$-invariant of a normal graded ring}

\author{Aryaman Maithani}
\address{Department of Mathematics, University of Utah, 155 South 1400 East, Salt Lake City, UT~84112, USA}
\email{maithani@math.utah.edu}

\author{Anurag K. Singh}
\address{Department of Mathematics, University of Utah, 155 South 1400 East, Salt Lake City, UT~84112, USA}
\email{singh@math.utah.edu}

\author{Kei-ichi Watanabe}
\address{Department of Mathematics, College of Humanities and Sciences, Nihon University, Setagaya-ku, Tokyo, 156-8550, Japan}
\email{watnbkei@gmail.com}

\thanks{A.M. was supported by a Simons Dissertation Fellowship, A.M. and A.K.S. by NSF grant DMS~2349623, and K.W. by JSPS Grant-in-Aid for Scientific Research (C)~26~K~06772.}

\dedicatory{Dedicated to the memory of Mitsuyasu Hashimoto}

\begin{abstract}
The $b$-invariant of a normal graded ring was defined recently by Okuma, Watanabe, and Yoshida.
We study this for invariant rings of finite groups, proving that it is nonpositive in the case of abelian groups, but not necessarily otherwise;
more generally, we give a description of the anticanonical module for invariant rings of finite groups.
Turning to infinite groups, we record the anticanonical module and the $b$-invariant for families of determinantal rings.

We also construct standard-graded Cohen-Macaulay rings of characteristic zero, with isolated singularities and negative $b$-invariant, that are not of strongly $F$-regular type, thereby disproving a conjecture of Okuma-Watanabe-Yoshida.
\end{abstract}

\vspace*{-1cm}
\maketitle

\section*{Introduction}

Consider a normal $\NN$-graded ring $R$ that is a finitely generated algebra over its degree zero graded component ${[R]}_0$, that we assume is a field.
Set $\frakm$ to be the homogeneous maximal ideal of $R$.
Following \cite{GW}, the \deff{$a$-invariant} of $R$, is defined as
\begin{equation*}
a(R) \colonequals \max\{i\in\ZZ \mid {[H^{\dim R}_\frakm(R)]}_i\neq 0\}.
\end{equation*}
An equivalent formulation in terms of the graded canonical module $\omega_R$ is
\begin{equation*}
a(R) \colonequals -\min\{i\in\ZZ \mid {[\omega_R]}_i\neq 0\}.
\end{equation*}
The definition has proven to be very useful over the last five decades;
more recently, for a normal $\NN$-graded ring $R$ as above, the \deff{$b$-invariant} of $R$ is defined in \cite{OWY} as
\begin{equation*}
b(R) \colonequals \min\{i\in\ZZ \mid {[\omega_R^{-1}]}_i\neq 0\},
\end{equation*}
where $\omega_R^{-1}\colonequals \Hom_R(\omega_R, R)$ is the \deff{anticanonical module} of the ring $R$.
It is readily seen that $b(R)\ge a(R)$, and that equality holds precisely when $\omega_R$ is a free $R$-module, i.e., precisely when $R$ is quasi-Gorenstein, \cite[Proposition~2.2]{OWY}.

The $b$-invariant turns out to be useful in investigating the \emph{nearly Gorenstein} property;
moreover, for two-dimensional normal rings $R$ of characteristic zero, while $a(R)<0$ implies that $R$ has rational singularities, the stronger condition $b(R)<0$ implies that $R$ is of $F$-regular type, see \cite[Proposition~3.4]{OWY}.
Motivated by this, the authors conjectured that the same remains true in higher dimension, assuming that $R$ has an isolated singularity, \cite[Conjecture~2.3]{OWY}.
We construct counterexamples to this in Section~\ref{section:counterexamples}.

Section~\ref{section:invariants} is devoted to investigating the $b$-invariant for rings of invariants of finite groups.
Consider a finite group $G$ acting on a polynomial ring $S$ by degree-preserving automorphisms;
then $a(S^G)\le -\dim S$, see, for example,~\cite[Proposition~4.1]{GJS}.
When~$G$ is abelian, we prove that $b(S^G)\le 0$, Theorem~\ref{theorem:abelian};
when $G$ is not abelian, we construct families of examples where $b(S^G)>0$.
Section~\ref{section:determinantal} records the calculation of the $b$-invariant for some classical invariant rings, and also for Hankel determinantal rings.

\section{Generalities}
\label{section:generalities}

\subsection*{Fractional ideals}

Let $R$ be a normal $\NN$-graded domain that is a finitely generated algebra over a field ${[R]}_0$.
All modules and isomorphisms that appear in this paper are tacitly assumed to be graded.
Let $K \colonequals Q(R)$ denote the \deff{total homogeneous quotient} of~$R$,
i.e., the~$\ZZ$\nobreakdash-graded ring obtained by inverting nonzero homogeneous elements of $R$.
The ring~$K$ has the property that each nonzero homogeneous element is a unit, so every graded~$K$\nobreakdash-module has a basis.
A (graded) \deff{fractional ideal} of $R$ is a (graded) finitely generated nonzero $R$-submodule $\fraka$ of $K$.
Any torsionfree $R$\nobreakdash-module $M$ of rank one is isomorphic to a fractional ideal of $R$, and we define
\begin{equation*}
M^{-1} \colonequals \Hom_R(M, R).
\end{equation*}
Suppose $R \into S$ is a finite extension of graded normal domains.
Then $L \colonequals Q(S)$ may be obtained as the localization of $S$ at homogeneous nonzero elements of $R$.
If $\fraka \subseteq L$ is a rank-one $R$-module, then $\fraka$ is isomorphic to a fractional ideal of $R$, and we have an isomorphism of $R$-modules
\begin{align*}
\{f \in L \mid f \fraka \subseteq R\} &\ \cong\ \Hom_R(\fraka, R) \\
f & \ \mapsto\ (a \longmapsto a f).
\end{align*}
In view of this, we shall use the description on the left as the definition of $\fraka^{-1}$ whenever~$\fraka$ is presented as a submodule of an extension $L$.
In particular, if $\fraka$ is contained in $R$, then one has $1 \in \fraka^{-1}$, giving us:

\begin{remark}
\label{remark:if-omega-ideal}
If $\omega_R$ is isomorphic to a graded ideal of $R$, then $b(R) \le 0$.
\end{remark}

\begin{example}
Let $R$ be a normal affine semigroup ring, i.e., a pure subalgebra of a polynomial ring $S\colonequals k[x_1,\dots,x_n]$ generated by monomials.
By Danilov \cite[4.6]{Danilov} and Stanley \cite[Corollary~I.13.1]{Stanley}, $\omega_R$ is isomorphic to a graded ideal of $R$;
see also \cite[Theorem~5.1]{BB}.
Hence $b(R) \le 0$ by Remark~\ref{remark:if-omega-ideal}.
In particular, for $G$ an algebraic torus, the invariant ring~$S^G$ satisfies $\omega_{S^{G}} \into S^{G}$ and consequently $b(S^G) \le 0$;
in Theorem~\ref{theorem:abelian} we prove the same for any finite abelian group.
\end{example}

\begin{example}
\label{example:veronese-canonical}
Fix integers $m,n\ge 2$.
Let $S\colonequals k[x_1,\dots,x_n]$ denote a polynomial ring, and~$R \colonequals S^{(m)}$ its $m$-th Veronese subring.
The ideal $x_1 \cdots x_n S$ is a graded canonical module for $S$; by \cite[Corollary~3.1.3]{GW} one has
\begin{equation*}
\omega_R\ =\ (x_1 \cdots x_n S)\cap R.
\end{equation*}
Its inverse may then be computed as
\begin{equation*}
\omega_R^{-1}\ =\ \{f\in Q(R) \mid f\omega_R \subseteq R\},
\end{equation*}
and this is the fractional ideal of $R$ generated by
\begin{equation*}
\frac{s}{x_1 \cdots x_n}\quad\text{for}\quad s \in{[S]}_{n+m\ZZ}.
\end{equation*}
It follows that $b(R)=-m\floor{n/m}$, while $a(R)= -m\ceil{n/m}$; in particular, $b(R)=0$ if $m>n$.

If $k$ contains a primitive $m$-th root of unity $\zeta$, then $R$ arises as the ring of invariants for the cyclic group $G \colonequals \langle \sigma \rangle$ of order $m$, where $\sigma$ acts on $S$ via $x_i \longmapsto \zeta x_i$.
\end{example}

\begin{remark}
\label{remark:UFD-a-equals-b}
If $R$ is a normal $\NN$-graded domain with ${[R]}_0$ a field, then $\omega_R$ is a reflexive $R$-module of rank one.
Thus, if $R$ is a UFD, then $\omega_R$ is free, and hence $a(R) = b(R)$.
\end{remark}

\subsection*{Normal graded rings and \texorpdfstring{$\QQ$}{Q}-divisors}

Following \cite{Dolgachev, Pinkham, Demazure}, a \deff{$\QQ$-divisor} on a normal projective variety $X$ is a $\QQ$-linear combination of codimension one irreducible subvarieties.
Let $D=\sum n_iV_i$ be such a $\QQ$-divisor, where the subvarieties $V_i$ are distinct.
Set
\begin{equation*}
\floor{D} \colonequals \sum \floor{n_i} V_i,
\end{equation*}
where $\floor{n}$ is the greatest integer less than or equal to $n$, and define
\begin{equation*}
\calO_X(D) \colonequals \calO_X(\floor{D}).
\end{equation*}
With $K(X)$ denoting the field of rational functions on $X$, it follows that
\begin{equation*}
H^0(X,\calO_X(D))\ = \{f\in K(X) \mid \divv(f)+ D\ge 0\}.
\end{equation*}

A $\QQ$-divisor $D$ is \deff{ample} if $ND$ is an ample Cartier divisor for some $N\in\NN$.
In this case, one has the \deff{generalized section ring}
\begin{equation*}
\varGamma_*(X,D)\colonequals\bigoplus_{n\ge0}H^0(X,\calO_X(nD))T^n,
\end{equation*}
where $T$ is an element of degree $1$, transcendental over $K(X)$.
With this notation:

\begin{theorem}{\cite[3.5]{Demazure}}
\label{theorem:Demazure}
Let $R$ be an $\NN$-graded normal domain that is finitely generated over a field ${[R]}_0$.
Suppose $Q(R)$ contains a homogeneous element $T$ of degree $1$.
Then there exists a unique ample $\QQ$-divisor $D$ on $X\colonequals\Proj R$ such that $R=\varGamma_*(X,D)$.
\qed
\end{theorem}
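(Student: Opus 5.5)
We prove Demazure's theorem by building $D$ from the divisor of $T$ on $X$ and then checking that the generalized section ring recovers $R$ degree by degree.

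\textbf{Setup.} Set $X \colonequals \Proj R$, which is a normal projective variety because $R$ is normal. Since $T \in Q(R)$ is homogeneous of degree $1$, every homogeneous element of degree $n$ in $Q(R)$ has the form $fT^n$ with $f \in {[Q(R)]}_0$. Moreover ${[Q(R)]}_0 = K(X)$. Hence
\begin{equation*}
Q(R) = K(X)[T,T^{-1}],
\end{equation*}
and for each $n$ there is a subspace $R_n' \subseteq K(X)$ with ${[R]}_n = R_n' T^n$. The goal is to find a $\QQ$-divisor $D$ with $R_n' = H^0(X,\calO_X(nD))$ for all $n \ge 0$.

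\textbf{Construction of $D$.} Each prime divisor $V \subset X$ corresponds to a homogeneous height-one prime $\frakp_V$ of $R$. Its graded localization $R_{(\frakp_V)}$ is a graded DVR with valuation $v_V$ on $Q(R)$. The degree-zero part of $v_V$ restricts to a discrete valuation on $K(X)$, namely $\mathrm{ord}_V$ scaled by the index $e_V$ of the value group. Concretely, let $e_V \ge 1$ be the gcd of the degrees of homogeneous units in that localization, so that $v_V|_{K(X)} = e_V\,\mathrm{ord}_V$. Define
\begin{equation*}
D \colonequals \sum_V \frac{v_V(T)}{e_V}\,V.
\end{equation*}
Only finitely many coefficients are nonzero: $T = a/b$ with $a,b$ homogeneous in $R$, and only finitely many $\frakp_V$ contain $a$ or $b$.

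\textbf{Key identification.} Since $R$ is normal, $R$ is the intersection over all height-one homogeneous primes of the graded localizations $R_{(\frakp)}$. The degree-$0$ prime $\frakm$-type components are irrelevant because $\dim R \ge 1$ and $R$ is graded. So for $f \in K(X)$ we have $fT^n \in R$ if and only if $v_V(fT^n) \ge 0$ for all $V$. This condition reads $e_V\,\mathrm{ord}_V(f) + n\,v_V(T) \ge 0$, that is, $\mathrm{ord}_V(f) \ge -n\,v_V(T)/e_V$. Since $\mathrm{ord}_V(f)$ is an integer, this is equivalent to $\mathrm{ord}_V(f) + \floor{n c_V} \ge 0$, where $c_V = v_V(T)/e_V$. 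Therefore $R_n' = H^0(X,\calO_X(nD))$.

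\textbf{Main obstacle.} The delicate point is justifying that $v_V|_{K(X)} = e_V\,\mathrm{ord}_V$. This requires identifying the degree-zero part of $R_{(\frakp_V)}$ with $\calO_{X,V}$ and comparing uniformizers. Fix a homogeneous uniformizer $\pi$ of the graded DVR and a homogeneous unit $u$ of minimal positive degree $e_V$. Then $\pi^{e_V}/u^{\deg \pi}$ has degree $0$, and since $u$ is a unit its value is $e_V\,v_V(\pi) = e_V$. One checks that this degree-zero element generates the maximal ideal of $\calO_{X,V}$.

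\textbf{Ampleness.} Choose $N$ such that $R^{(N)}$ is generated in degree one. Then $ND$ is Cartier and $\calO_X(ND)$ is $\calO_X(1)$ for the embedding $X = \Proj R^{(N)}$, so $ND$ is ample.

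\textbf{Uniqueness.} If $\varGamma_*(X,D) = \varGamma_*(X,D')$ with the same $T$, then $\floor{nD} = \floor{nD'}$ for all $n \ge 0$, since both sides are determined by the sections for large $n$ by ampleness. Dividing by $n$ and letting $n \to \infty$ gives $D = D'$.
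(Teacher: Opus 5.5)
\textbf{Verdict.} The paper gives no proof of this statement; it quotes Demazure and closes with \qed. Your argument is correct and is in substance Demazure's own construction. You read off $D$ from the values of $T$ at the height-one homogeneous primes, rescaled by the index $e_V$ of $v_V(K(X)^\times)$ in $\ZZ$. The equality $R_n'=H^0(X,\calO_X(nD))$ then follows by writing the normal ring $R$ as the intersection of its homogeneous localizations at height-one homogeneous primes. Since $v_V$ has a homogeneous uniformizer, $\gcd(v_V(T),e_V)=1$, so $e_V$ is exactly the denominator of the coefficient of $V$, i.e.\ the $t_i$ that enters $D'$ later in the paper.

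\textbf{Points to tighten.} A few steps need a line more than you give them.
\begin{enumerate}[\quad(1)]
\item The correspondence between prime divisors of $X$ and height-one homogeneous primes of $R$ needs $\dim R\ge 2$, not $\dim R\ge 1$. If $\dim R=1$, then $\frakm$ is itself a height-one homogeneous prime that $X$ does not see. That case is trivial anyway: normality forces $R=k[T]$ and $D=0$.
\item Your ``delicate point'' needs no comparison of uniformizers. A homogeneous fraction $a/b$ lies in $R_{\frakp_V}$ if and only if it lies in the homogeneous localization, because $(bR:_R a)$ is a homogeneous ideal. Hence $R_{\frakp_V}\cap K(X)=\calO_{X,V}$, and $v_V|_{K(X)}$ is automatically a positive integer multiple of $\operatorname{ord}_V$. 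Your description of $e_V$ as the gcd of degrees of homogeneous units is true (both projections of $\{(\deg f,v_V(f))\}\subseteq\ZZ^2$ are onto, the first because $\deg T=1$), but it is never actually used.
\item In the ampleness step you should say why $ND$ is an integral Cartier divisor. For $s\in{[R]}_N\smallsetminus\frakp_V$ one has $\operatorname{ord}_V(T^N/s)=N\,v_V(T)/e_V$. So $ND$ has integer coefficients and agrees with $\divv(T^N/s)$ on $D_+(s)$. These open sets cover $X$ because ${[R]}_N$ generates $R^{(N)}$. The local generators $s/T^N$ then identify $\calO_X(ND)$ with $\calO_X(1)$.
\item In the uniqueness step you only get $\floor{nD}=\floor{nD'}$ for $n$ sufficiently divisible, not for all $n\ge 0$. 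For such $n$, both sheaves are generated by global sections at every codimension-one point, so each divisor is $-\min\divv(f)$ over its sections. That is all the limiting argument needs.
\end{enumerate}
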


Given a $\QQ$-divisor $D=\sum_i (s_i/t_i)V_i$ with $V_i$ distinct, $s_i$ and $t_i$ being relatively prime integers, and $t_i>0$, set
\begin{equation}
\label{equation:fractional-part}
D'\colonequals\sum_i \frac{t_i-1}{t_i}V_i.
\end{equation}
By \cite[Theorem~2.8]{Watanabe:demazure} and \cite[\S3.2]{Watanabe:dim2}, for each $n\in\ZZ$, one has
\begin{equation*}
{[\omega_R]}_n\ =\ H^0\left(X,\calO_X(K_X+D'+nD)\right)T^n,
\end{equation*}
where $\omega_R$ is the graded canonical module of $R\colonequals\varGamma_*(X,D)$, and $K_X$ is the canonical divisor of $X$;
similarly, the anticanonical module of $R$ has graded components
\begin{equation}
\label{equation:anticanonical}
{[\omega_R^{-1}]}_n\ =\ H^0\left(X,\calO_X(-(K_X+D')+nD)\right)T^n.
\end{equation}

\subsection*{Segre products}

Let $R$, $S$ be $\NN$-graded rings finitely generated over a field~${[R]}_0=k={[S]}_0$.
The \deff{Segre product} of $R$ and $S$ is the $\NN$-graded ring
\begin{equation*}
R\segre S \colonequals \bigoplus_{n\ge 0}{[R]}_n\otimes_k {[S]}_n.
\end{equation*}
We record the K\"unneth formula for local cohomology, \cite[Theorem~4.1.5]{GW}, and some consequences, \cite[Theorems~4.2.3 and~4.3.1]{GW}:

\begin{theorem}[Goto-Watanabe]
\label{theorem:segre}
Let $R$ and $S$ be normal $\NN$-graded rings as above, of dimension at least two.
Set $\frakm_R$, $\frakm_S$, and $\frakm$ to be the homogeneous maximal ideals of the rings~$R$, $S$, and~$R\segre S$ respectively.
Then, for each $d\ge 0$, one has
\begin{equation*}
H^d_{\frakm}(R\segre S)\ =\ \big(R\segre H^d_{\frakm_S}(S)\big) \oplus \big(H^d_{\frakm_R}(R)\segre S\big) \oplus
\bigoplus_{i+j=d+1} \big(H^i_{\frakm_R}(R)\segre H^j_{\frakm_S}(S)\big).
\end{equation*}
In particular, $\dim(R \segre S) = \dim R + \dim S - 1$, and the graded canonical module of $R\segre S$ is
\begin{equation*}
\omega_{R \segre S}\ =\ \omega_R \segre \omega_S.
\end{equation*}
If $R$, $S$ are Cohen-Macaulay with negative $a$-invariants, then $R \segre S$ is Cohen-Macaulay.
\qed
\end{theorem}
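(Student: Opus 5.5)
Theorem~\ref{theorem:segre} is quoted from Goto--Watanabe and ends with \qed, so the paper takes it from \cite{GW}. If I had to prove it, I would deduce the last three assertions from the K\"unneth formula, and prove the formula itself by \v{C}ech complexes and sheaf cohomology.

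For the K\"unneth formula I would pass to projective schemes. Set $X=\Proj R$ and $Y=\Proj S$, and let $\mathcal{F}_n$, $\mathcal{G}_n$ denote the coherent sheaves attached to the twists $R(n)$, $S(n)$; no Veronese reduction is needed for this. For normal rings of dimension at least two, the graded local cohomology is recovered from sheaf cohomology:
\begin{itemize}
\item $H^0_{\frakm}$ and $H^1_{\frakm}$ vanish;
\item $\bigoplus_n H^0(X,\mathcal{F}_n)=R$, by depth $\ge 2$;
\item $H^{i}_{\frakm_R}(R)=\bigoplus_n H^{i-1}(X,\mathcal{F}_n)$ for $i\ge 2$.
\end{itemize}
The ring $R\segre S$ is the section ring of $X\times Y$ with respect to the box products $\mathcal{F}_n\boxtimes\mathcal{G}_n$. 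The K\"unneth formula for coherent sheaves over a field gives
\[
H^{p}(X\times Y,\mathcal{F}_n\boxtimes\mathcal{G}_n)=\bigoplus_{i+j=p}H^i(X,\mathcal{F}_n)\otimes_k H^j(Y,\mathcal{G}_n).
\]
Translating back, with $i,j$ shifted by one, yields the three kinds of summands. The $j=0$ term gives $H^d_{\frakm_R}(R)\segre S$, the $i=0$ term gives $R\segre H^d_{\frakm_S}(S)$, and the mixed terms give $H^{i}_{\frakm_R}\segre H^{j}_{\frakm_S}$ with $i+j=d+1$. One must check that $R\segre S$ also has depth $\ge2$, so that its local cohomology is computed the same way. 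This holds because $R\segre S$ is a direct summand of $R\otimes_k S$ under the torus action, hence normal.

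The remaining assertions follow from the formula:
\begin{itemize}
\item \textbf{Dimension.} The top nonvanishing index is $\dim R+\dim S-1$, coming from the mixed term $H^{\dim R}\segre H^{\dim S}$. This term is nonzero because both top local cohomologies are nonzero in all sufficiently negative degrees.
\item \textbf{Cohen--Macaulay.} Suppose $R$ and $S$ are Cohen--Macaulay with $a(R),a(S)<0$. Lower local cohomology can only arise from $R\segre H^{\dim S}_{\frakm_S}(S)$ or $H^{\dim R}_{\frakm_R}(R)\segre S$. Both vanish, since the top local cohomology has no components of degree $\ge 0$ while $R$ and $S$ live in degrees $\ge0$.
\item \textbf{Canonical module.} Graded duality gives $\omega_{R\segre S}$ as the graded dual of $H^{\dim R+\dim S-1}_{\frakm}$, which by the above is $H^{\dim R}_{\frakm_R}(R)\segre H^{\dim S}_{\frakm_S}(S)$. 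Taking graded $k$-duals componentwise gives $\omega_R\segre\omega_S$.
\end{itemize}

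I expect the main obstacle to be the projective reduction when $R$ or $S$ is not generated in degree one. In that case $\mathcal{F}_n$ is not $\calO(n)$, and one has to check that the section ring of the box products really equals $R\segre S$ in every degree. I would handle this by working directly with \v{C}ech complexes on the homogeneous elements rather than with invertible sheaves.
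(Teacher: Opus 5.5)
Your outline is correct and is essentially the Goto--Watanabe argument behind the result the paper quotes without proof (\cite[Theorems~4.1.5, 4.2.3, and~4.3.1]{GW}): for $d\ge 2$ identify ${[H^d_{\frakm_R}(M)]}_n$ with $H^{d-1}(\Proj R,\widetilde{M(n)})$, identify $\Proj(R\segre S)$ with $\Proj R\times\Proj S$ and $\widetilde{(R\segre S)(n)}$ with $\widetilde{R(n)}\boxtimes\widetilde{S(n)}$ on the charts $D_+(fg)$ with $f\in R$, $g\in S$ homogeneous of equal degree (which disposes of your worry about generation in degree one), and then apply K\"unneth. Two side justifications need repair. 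First, depth $\ge 2$ of $R\segre S$ follows directly from the $H^0$ case of K\"unneth, since ${[R\segre S]}_n\to H^0(X\times Y,\mathcal{F}_n\boxtimes\mathcal{G}_n)={[R]}_n\otimes_k{[S]}_n$ is bijective; your normality argument instead fails over imperfect fields (for $k=\mathbb{F}_p(s,t)$ and $R=S=k[x,y,z]/(y^p-sx^p-tz^p)$, neither $R\otimes_k S$ nor $R\segre S$ is normal). Second, the top local cohomology of $R$ lives in degrees divisible by the gcd $g_R$ of the degrees occurring in $R$, so it is nonzero in all sufficiently negative multiples of $g_R$ rather than in all sufficiently negative degrees; this still makes $H^{\dim R}_{\frakm_R}(R)\segre H^{\dim S}_{\frakm_S}(S)$ nonzero.
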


\subsection*{Rings of \texorpdfstring{$F$}{F}-regular type}

The theory of tight closure was introduced by Hochster and Huneke in \cite{HH:JAMS}, and further developed in the graded context in~\cite{HH:JAG}.
When~$R$ is an~$\NN$-graded ring that is finitely generated over a field ${[R]}_0$ of positive characteristic, the properties of weak $F$-regularity, $F$-regularity, and strong~$F$-regularity coincide by \cite[Corollary~3.3]{Lyubeznik-Smith}, so we downplay the distinction.

Suppose $R=k[x_1,\dots,x_n]/I$ is a ring finitely generated over a field $k$ of characteristic zero; choose a finitely generated $\ZZ$-algebra $A \subseteq k$ such that
\begin{equation*}
R_A \ =\ A[x_1,\dots,x_n]/I_A
\end{equation*}
is a free $A$-module with $R \cong R_A \otimes_A k$.
The fibers of the homomorphism $A \to R_A$ over maximal ideals $\frakm$ of $A$ are finitely generated algebras over fields of positive characteristic.
We say $R$ has \deff{$F$-regular type} if there exists a finitely generated $\ZZ$-algebra $A \subseteq k$ and a finitely generated $A$-algebra $R_A$ as above such that $R \cong R_A \otimes_A k$, and for all maximal ideals~$\frakm$ in a Zariski dense subset of $\Spec A$, the fiber rings $R_A \otimes_A A/\frakm$ are $F$-regular.

\section{Invariants of finite groups}
\label{section:invariants}

Fix a field $k$ and a finite subgroup $G$ of $\GL_n(k)$.
The group $G$ acts on $S \colonequals k[x_1,\dots,x_n]$ via degree-preserving $k$-algebra automorphisms, where the action of $g \in G$ is given by
\begin{equation*}
\begin{bmatrix} x_1 & \hdots & x_n \end{bmatrix}^{\tr}
\ \longmapsto\
g^{-1} \begin{bmatrix} x_1 & \hdots & x_n \end{bmatrix}^{\tr}.
\end{equation*}
Given a character, i.e., a homomorphism $\mu \colon G \to k^{\times}$, the module of $\mu$-\deff{semiinvariants} is
\begin{equation*}
S^G_{\mu} \colonequals \{s \in S \mid g(s) = \mu(g) s \text{ for all }g \in G\},
\end{equation*}
and we say that $\mu$ is the \deff{eigencharacter} of any nonzero $\mu$-semiinvariant.
It is well-known that $S^G_\mu$ is a reflexive $S^G$-module of rank one, \cite[Lemma~2.1]{Nakajima}.
Set $L \colonequals Q(S)$, and define
\begin{equation*}
{(S^G_{\mu})}^{-1} \colonequals \{f \in L \mid f S^G_{\mu} \subseteq S^G\},
\end{equation*}
as earlier.
While the inclusion $S^G_{\mu^{-1}} \subseteq {(S^G_{\mu})}^{-1}$ is clear, equality holds precisely when $\mu$ is trivial on pseudoreflections, see Remark~\ref{remark:interpret-inverse};
recall that $g\in G$ is a \deff{pseudoreflection} if the matrix $1-g$ has rank one.
A nondiagonalizable pseudoreflection is a \deff{transvection}.
We look at an example next to help calibrate the notation; as $G\subseteq\GL_n(k)$, one has the determinant character $\det\colon G\to k^{\times}$. For $M$ a graded module, $M(i)$ below denotes the module with the shifted grading ${[M(i)]}_j={[M]}_{i+j}$.

\begin{example}
Let $G$ be the subgroup of $\GL_2(\CC)$ generated by
$\sigma \colonequals \begin{bsmallmatrix}\zeta & 0 \\ 0 & \zeta\end{bsmallmatrix}$,
where $\zeta$ is a primitive cube root of unity.
Note that
\begin{equation*}
\sigma\colon x^i y^j\ \longmapsto\ \zeta^{-(i + j)} x^i y^j.
\end{equation*}
As $\det \sigma = \zeta^{-1}$, we obtain the following bases:
\begin{align*}
S^G &\ =\ \CC \{ x^i y^j \mid i + j \equiv 0 \mod 3 \}, \\
S^G_{\det} &\ =\ \CC \{ x^i y^j \mid i + j \equiv 1 \mod 3 \}, \\
S^G_{\det^{-1}} &\ =\ \CC \{ x^i y^j \mid i + j \equiv 2 \mod 3 \},
\end{align*}
giving us
\begin{equation*}
S^G = \CC[x^3,\, x^2 y,\, x y^2,\, y^3], \qquad
S^G_{\det} = S^G\{x,\, y\}, \qquad
S^G_{\det^{-1}} = S^G\{x^2,\, x y,\, y^2\}.
\end{equation*}
Comparing with Example~\ref{example:veronese-canonical}, we see that
$\omega_{S^G} \cong S^G_{\det}(-2)$
and
$\omega_{S^G}^{-1} \cong S^G_{\det^{-1}}(2)$;
these illustrate more general phenomena proven as Theorems~\ref{theorem:canonical-det} and~\ref{theorem:anticanonical-det-inv}.
\end{example}

Recall that the Dedekind different $\calD_{S/S^G}$ is a divisorial ideal of $S$, \cite[\S3.10]{Benson}.
As $S$ is a~UFD, this is a principal ideal generated by a homogeneous semiinvariant $\theta$, whose eigencharacter we denote by $\chi$;
see Remark~\ref{remark:description-theta-f} for a formula for~$\theta$.
The character $\chi$ can be used to detect the quasi-Gorenstein property: the ring $S^G$ is quasi-Gorenstein precisely when the character $\det$ coincides with~$\chi$, see \cite[Corollary~7~(iv)]{Broer}.
The characters $\det$ and~$\chi$ agree on pseudoreflections, as recorded in Lemma~\ref{lemma:det-chi-pseudo}.

The following description of the canonical module is due to Broer \cite[Corollary~7]{Broer}, extending the description for nonmodular actions due to the third author \cite{Watanabe:gorenstein}.

\begin{theorem}[Broer, Watanabe]
\label{theorem:canonical-det}
Let $k$ be a field, $G$ a finite subgroup of $\GL_n(k)$ acting on the polynomial ring $S \colonequals k[x_1,\dots,x_n]$, and let $\theta$ and $\chi$ be as above.
Then the graded canonical module of $S^G$ is
\begin{equation*}
\omega_{S^G} \ \cong\ S^G_{\det/\chi} (-\deg \theta -n),
\end{equation*}
and hence there exists a degree-preserving inclusion
\begin{equation} \label{equation:inclusion-canonical}
\omega_{S^G} \ \into\ S^G_{\det} (-n).
\end{equation}

If $G$ contains no transvections (e.g., the action is nonmodular), then the above is an isomorphism, i.e.,
\begin{equation*}
\pushQED{\qed}
\omega_{S^G} \ \cong\ S^G_{\det} (-n). \qedhere
\popQED
\end{equation*}
\end{theorem}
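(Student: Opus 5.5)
The plan is to compute $\omega_{S^G}$ as the $G$-fixed part of a canonical module of $S$ twisted by the Dedekind different. For a finite extension $R\colonequals S^G\into S$ of graded normal domains, duality for finite maps gives
\begin{equation*}
\omega_S\ \cong\ \Hom_R(S,\omega_R).
\end{equation*}
We have $\omega_S=S(-n)$, and as a representation of $G$ it is $S\otimes\det$ (generated by $dx_1\wedge\cdots\wedge dx_n$, on which $g$ acts by $\det$ up to the chosen inverse convention). The different enters through the trace $\tr\colon L\to Q(R)$, where $L\colonequals Q(S)$. The inverse different $\calD_{S/S^G}^{-1}=\theta^{-1}S$ is $\{f\in L \mid \tr(fS)\subseteq R\}$. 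This identifies $\Hom_R(S,R)^{**}$ with $\theta^{-1}S$, compatibly with $G$ and the grading, via $f\mapsto \tr(f\cdot-)$.

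The key steps, in order, are as follows.

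\begin{enumerate}[(1)]
\item Since $\omega_R$ is reflexive of rank one, work in codimension one. Localizing at height-one primes of $R$ reduces to DVRs, where $\omega_R$ is free. There $\Hom_R(S,\omega_R)\cong \Hom_R(S,R)\otimes\omega_R\cong \theta^{-1}S\otimes_R\omega_R$.
\item Reflexivity then yields an equivariant graded isomorphism $\omega_S\cong(\theta^{-1}S\otimes_R\omega_R)^{**}$.
\item Take $G$-invariants after twisting by characters. The semiinvariants of $\theta^{-1}S$ of character $\nu$ are $\theta^{-1}S^G_{\nu\chi}$, and since $\theta$ has degree $\deg\theta$, we get $[\theta^{-1}S]^G_{\nu}\cong S^G_{\nu\chi}(\deg\theta)$. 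Matching the $\det$-isotypic parts (the trace pairing lands in invariants) gives
\begin{equation*}
\omega_R\ \cong\ S^G_{\det/\chi}(-n-\deg\theta).
\end{equation*}
As a more hands-on alternative, I would show directly that $\omega_R=\{s\in S(-n)\otimes\det \text{ invariant}\}\cdot\theta^{-1}$, checking at each height-one prime $\frakp$ of $R$ that both sides agree. This is a local computation over the inertia group of a prime of $S$ above $\frakp$, which is generated by pseudoreflections (and transvections in the modular case).
\item For the inclusion \eqref{equation:inclusion-canonical}, multiply by $\theta$. The map $s\mapsto \theta s$ sends $S^G_{\det/\chi}$ into $S^G_{\det}$ and raises degree by $\deg\theta$, giving an injective degree-preserving map $S^G_{\det/\chi}(-\deg\theta-n)\into S^G_{\det}(-n)$.
\item In the absence of transvections, show that the obstruction to surjectivity disappears. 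Write $\theta=\prod \ell_H^{e_H-1}$ over reflecting hyperplanes $H$, with $\ell_H$ a linear form and $e_H$ the order of the pointwise stabilizer. Then a $\det$-semiinvariant is divisible by each $\ell_H^{e_H-1}$: the diagonalizable reflection $g$ of order $e_H$ acts on $\ell_H$ by a primitive $e_H$-th root of unity, and the $\det$ condition forces the $\ell_H$-adic order to be $\equiv e_H-1 \bmod e_H$. Hence $\theta$ divides every element of $S^G_{\det}$, so multiplication by $\theta$ is onto.
\end{enumerate}

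The main obstacle is step (3): making the identification of $\Hom_R(S,\omega_R)$ with $\theta^{-1}\omega_R S$ equivariant. Concretely, this means pinning down that the twisting character is precisely $\det/\chi$, with correct inverse conventions, in the modular case. There the trace is not a split projection, so one cannot average. I would handle this prime-by-prime at height one, using that the inertia groups act through explicit (pseudo)reflection groups in one variable after completion.
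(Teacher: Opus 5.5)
The paper gives no proof of this statement. It is quoted from \cite[Corollary~7]{Broer}, and from \cite{Watanabe:gorenstein} in the nonmodular case. The inclusion is then obtained exactly as in your step~(4), by multiplying by $\theta$. The transvection-free case amounts to $\theta$ being the gcd of $S^G_{\det}$: in the paper's framework this is $f_{\det}=\prod\ell_i^{e_i-1}=\theta$, from Remark~\ref{remark:description-theta-f} with all $q_i=0$, and your step~(5) verifies it by hand.

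So your proposal is a self-contained derivation of a cited result, and it is sound. The ingredients are the right ones: finite duality $\omega_S\cong\Hom_R(S,\omega_R)$, the trace identification $\Hom_R(S,R)\cong\theta^{-1}S$, and passage to reflexive hulls in codimension one. In the nonmodular case one could instead use the Reynolds operator to identify $\omega_{S^G}$ with $(\omega_S)^G$. Your argument never averages, so it shows directly why $\theta$ and $\chi$ enter, and it handles the modular case uniformly.

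Two corrections.

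First, the obstacle you flag is not real. Since $\tr\circ g=\tr$, one has $g\cdot\tr(f\,\cdot\,-)=\tr(g(f)\,\cdot\,-)$. Hence $f\mapsto\tr(f\,\cdot\,-)$ is automatically $G$-equivariant and degree-preserving, and no inertia-group analysis is needed for the main isomorphism.

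Second, the end of step~(3) is misstated.
\begin{itemize}
\item Embed $\omega_R$ as a graded divisorial fractional ideal $\fraka\subseteq Q(R)$. Your equivariant isomorphism makes $\theta^{-1}(\fraka S)^{**}$ a $G$-stable cyclic $S$-submodule of $L$ generated in degree $n$.
\item Hence $(\fraka S)^{**}=\theta c\,S$ for a semiinvariant $c\in L$ of degree $n$, whose eigencharacter is that of the generator $dx_1\wedge\cdots\wedge dx_n$ of $\omega_S$.
\item The fact you actually need is $\fraka=\big((\fraka S)^{**}\big)^G$. It holds because $(\fraka S)^{**}\cap Q(R)=\fraka$, which you can check at height-one primes $\frakp$ of $R$ using $S_\frakp\cap Q(R)=R_\frakp$.
\item Equivalently, you must compare the $\chi^{-1}$-isotypic components of the two sides, not the $\det$-isotypic ones.
\end{itemize}
This gives $\omega_R=\theta c\,S^G_{\nu}$, with $\nu$ the inverse of the eigencharacter of $\theta c$. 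The result is $S^G_{\det/\chi}(-\deg\theta-n)$ precisely when $dx_1\wedge\cdots\wedge dx_n$ has eigencharacter $\det^{-1}$. That is the case under the paper's convention that $g$ acts on the variables through $g^{-1}$. So you should fix $\omega_S\cong S(-n)\otimes\det^{-1}$ rather than $S\otimes\det$.
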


We give an analogous description of the anticanonical module:

\begin{theorem}
\label{theorem:anticanonical-det-inv}
Let $k$ be a field, $G$ a finite subgroup of $\GL_n(k)$ acting on $S \colonequals k[x_1,\dots,x_n]$, and let $\theta$ and $\chi$ be as above.
The graded anticanonical module of $S^G$ is
\begin{equation*}
\omega_{S^G}^{-1} \ \cong\ S^G_{\chi/\det} (\deg \theta + n),
\end{equation*}
and there is a degree-preserving inclusion
\begin{equation}
\label{equation:inclusion}
S^G_{\det^{-1}}(n) \ \into\ \omega_{S^G}^{-1}.
\end{equation}

If $G$ contains no pseudoreflections, then the above is an isomorphism, i.e.,
\begin{equation*}
\omega_{S^G}^{-1} \ \cong\ S^G_{\det^{-1}}(n).
\end{equation*}

In characteristic zero, the Hilbert series of the anticanonical module---and, in turn, the $b$-invariant---may thus be computed using Molien's formula.
\end{theorem}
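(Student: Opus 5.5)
The plan is to dualize Theorem~\ref{theorem:canonical-det} directly. Since $\omega_{S^G}\cong S^G_{\det/\chi}(-\deg\theta-n)$, and $(M(i))^{-1}\cong M^{-1}(-i)$ for a rank-one module $M$, it suffices to show
\begin{equation*}
{(S^G_{\mu})}^{-1}\ =\ S^G_{\mu^{-1}}\quad\text{for}\quad \mu\colonequals \det/\chi .
\end{equation*}
By Lemma~\ref{lemma:det-chi-pseudo}, $\det$ and $\chi$ agree on pseudoreflections, so $\mu$ is trivial on every pseudoreflection. I would therefore prove the general statement that ${(S^G_\mu)}^{-1}=S^G_{\mu^{-1}}$ whenever $\mu$ is trivial on pseudoreflections. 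This is the ``if'' direction of Remark~\ref{remark:interpret-inverse}.

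The inclusion $S^G_{\mu^{-1}}\subseteq{(S^G_\mu)}^{-1}$ is clear. For the reverse, take $f\in L$ with $fS^G_\mu\subseteq S^G$, and fix a nonzero $u\in S^G_\mu$.
\begin{enumerate}[(i)]
\item \emph{Semiinvariance.} Writing $f=r/u$ with $r\in S^G$ gives $g(f)=\mu(g)^{-1}f$ for all $g\in G$.
\item \emph{Integrality.} We have $f\cdot(S^G_\mu S)\subseteq S$. If the ideal $I\colonequals S^G_\mu S$ has height at least two, then $f\in\bigcap_{\operatorname{ht}P=1}S_P=S$, since $S$ is normal. Hence $f\in S^G_{\mu^{-1}}$.
\end{enumerate}

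The main obstacle is the height claim. Equivalently, for each height-one prime $P$ of $S$ there must be a $\mu$-semiinvariant outside $P$. I would argue as follows. Let $\frakp=P\cap S^G$, and let $I_P$ be the inertia group of $P$. It is standard that $I_P$ consists of the identity together with pseudoreflections fixing the hyperplane $V(P)$ pointwise, and when $P$ is not such a hyperplane $I_P$ is trivial. Because $\mu|_{I_P}$ is trivial, one can pass to the strict henselization, or to the completion of $(S^G)_\frakp$. There $S$ becomes a product of copies of $\widehat{S_P}$ permuted by $G$, and the $\mu$-semiinvariants correspond to $I_P$-invariants, twisted by $\mu$ on the decomposition group. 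This shows that ${(S^G_\mu)}_\frakp$ is free, generated by an element that is a unit at $P$. Pulling this back gives a $\mu$-semiinvariant not in $P$; a prime-avoidance or transfer-type argument may be needed to descend the generator from the completion. This is the step I expect to require the most care, especially in the modular case, where transvections have wild inertia.

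Finally, for the inclusion \eqref{equation:inclusion}, multiplication by $\theta$ maps $S^G_{\det^{-1}}$ into $S^G_{\chi/\det}$ and raises degree by $\deg\theta$. This gives a degree-preserving injection $S^G_{\det^{-1}}(n)\into S^G_{\chi/\det}(\deg\theta+n)\cong\omega_{S^G}^{-1}$. If $G$ has no pseudoreflections, then $S$ is unramified over $S^G$ in codimension one. Hence the Dedekind different is the unit ideal, so $\theta$ is a unit and $\chi$ is trivial, and the displayed isomorphism follows. The Molien statement then follows from the standard twisted Molien formula $\frac{1}{|G|}\sum_{g}\mu(g)^{-1}/\det(1-tg)$ for the Hilbert series of $S^G_\mu$, suitably normalized to the paper's action convention.
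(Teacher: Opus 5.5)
Outside the height claim your argument is correct and is the paper's own. Theorem~\ref{theorem:canonical-det} and Lemma~\ref{lemma:det-chi-pseudo} reduce everything to ${(S^G_\mu)}^{-1}=S^G_{\mu^{-1}}$ for $\mu$ trivial on pseudoreflections. Since $S$ is a UFD, your condition $\operatorname{ht}(S^G_\mu S)\ge 2$ says exactly that $f_\mu=\gcd(S^G_\mu)$ is a unit, so your integrality step is the coprime-fraction argument of Theorem~\ref{theorem:inverses-semiinvariants}. The $\theta$-multiplication inclusion, the pseudoreflection-free case and the Molien remark are also fine.

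The gap is the height claim itself. It is the entire nontrivial content, and your sketch does not prove it. The paper does not prove it either: it imports $f_\mu=\prod\ell_i^{n_i}$ from Nakajima and Broer (Remark~\ref{remark:description-theta-f}). This gives $f_\mu=1$, because each pointwise stabilizer $H_i$ consists of the identity and pseudoreflections. In your completion picture, identifying $\mu$-semiinvariants with $\mu$-twisted $D_P/I_P$-semiinvariants of ${(\widehat{S_P})}^{I_P}$ does not by itself produce one that is a unit. The obvious candidate, the twisted average of $1$, is $\sum_{h\in D_P/I_P}\mu(h)^{-1}$, which is $0$ when $\mu|_{D_P}\neq 1$. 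So ``this shows'' skips exactly the point at issue.

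The missing ingredient is the inertia-ring fact that $S^{I_P}$ is unramified over $S^G$ at $P_I\colonequals P\cap S^{I_P}$. Concretely, a uniformizer $\pi\in S^G$ of $(S^G)_\frakp$ has $v_{P_I}(\pi)=1$, and this holds regardless of wild inertia. With it you need neither completion nor prime avoidance:
\begin{enumerate}[(a)]
\item A nonzero $s\in S^G_\mu$ lies in $S^{I_P}$ because $\mu|_{I_P}=1$. So $t\colonequals s/\pi^{v_{P_I}(s)}$ is a $\mu$-semiinvariant in $L$ with $v_P(t)=0$.
\item Since $v_{gP}(t)=v_P(g^{-1}t)=v_P(t)$ for all $g\in G$, the element $t$ lies in $S_Q$ for every prime $Q$ over $\frakp$. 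Hence $t\in{(S_\frakp)}^G_\mu={(S^G_\mu)}_\frakp$.
\item Writing $t=s'/a$ with $s'\in S^G_\mu$ and $a\in S^G\setminus\frakp$ gives $s'\notin P$.
\end{enumerate}

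This also shows your descent worry is unfounded. If some element of ${(S^G_\mu)}_\frakp$ (or of its completion) is a unit at $P$, then so is every generator of this free rank-one module. Clearing denominators by an element of $S^G\setminus\frakp$ does not change $v_P$. Either supply this argument or cite Nakajima and Broer as the paper does.
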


\begin{proof}
The descriptions of the anticanonical module follow from Theorems~\ref{theorem:inverses-semiinvariants} and~\ref{theorem:canonical-det}, after noting that $\det/\chi$ is trivial on pseudoreflections, Lemma~\ref{lemma:det-chi-pseudo}. 
The inclusion~\eqref{equation:inclusion} follows from the analogous inclusion~\eqref{equation:inclusion-canonical}, after noting that $S^{G}_{\det^{-1}} \subseteq (S^{G}_{\det})^{-1}$.
\end{proof}

\begin{remark}
Unlike Theorem~\ref{theorem:canonical-det}, where the description
$\omega_{S^G} \cong S^G_{\det}(-n)$
holds in the nonmodular setting, one does not generally have an isomorphism of the form
\begin{equation*}
\omega_{S^G}^{-1} \cong S^G_{\det^{-1}}(d)
\end{equation*}
for $d$ an integer, in the presence of pseudoreflections, even in characteristic zero:
for $\zeta$ a primitive sixth root of unity, consider the cyclic group~$G$ generated by
\begin{equation*}
\sigma \colonequals \begin{bmatrix}\zeta^{-1} & 0 \\ 0 & \zeta^2\end{bmatrix}
\end{equation*}
acting on $S \colonequals \CC[x,y]$.
One checks that $S^G_{\det} = x S^G$, so $S^G$ is Gorenstein with a cyclic anticanonical module, but $S^G_{\det^{-1}}$ is not cyclic.
Note that $\sigma^3$ is a pseudoreflection.
However, such an isomorphism does hold for permutation actions, see Theorem~\ref{theorem:anticanonical-permutation-action}.
\end{remark}

\begin{proposition}
Let $G$ be a finite subgroup of $\GL_n(k)$ acting on $S \colonequals k[x_1,\dots,x_n]$.
\begin{enumerate}[\quad\rm(1)]
\item If $G$ is contained in $\SL_n(k)$, then $b(S^G) \le -n$.
\item Assuming $G$ contains no pseudoreflections, $G \subseteq \SL_n(k)$ if and only if $b(S^G) = -n$.
\end{enumerate}
\end{proposition}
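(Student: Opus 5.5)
Both parts follow from Theorem~\ref{theorem:anticanonical-det-inv}, by locating the lowest degree of a semiinvariant of prescribed eigencharacter.

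For (1), suppose $G\subseteq\SL_n(k)$, so $\det$ is trivial and $S^G_{\det^{-1}}=S^G$. The inclusion~\eqref{equation:inclusion} then reads $S^G(n)\into\omega_{S^G}^{-1}$ and is degree-preserving. The element $1\in S^G$ lies in degree $0$ of $S^G$, hence in degree $-n$ of $S^G(n)$, since ${[S^G(n)]}_{-n}={[S^G]}_0$. Its image is a nonzero element of ${[\omega_{S^G}^{-1}]}_{-n}$, so $b(S^G)\le -n$.

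For (2), assume $G$ contains no pseudoreflections. Theorem~\ref{theorem:anticanonical-det-inv} gives
\begin{equation*}
\omega_{S^G}^{-1}\cong S^G_{\det^{-1}}(n),
\end{equation*}
and hence
\begin{equation*}
b(S^G)=\min\{i \mid {[S^G_{\det^{-1}}]}_i\neq 0\}-n.
\end{equation*}
\begin{enumerate}[\quad\rm(i)]
\item If $G\subseteq\SL_n(k)$, then $S^G_{\det^{-1}}=S^G$, whose least nonzero degree is $0$. Thus $b(S^G)=-n$; this uses only the isomorphism, not part (1).
\item Conversely, suppose $b(S^G)=-n$. Then $S^G_{\det^{-1}}$ has a nonzero element in degree $0$, i.e.\ a nonzero constant $c\in k$. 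Since $G$ acts trivially on constants, $c=g(c)=\det(g)^{-1}c$ for every $g\in G$. Hence $\det(g)=1$ for all $g$, i.e.\ $G\subseteq\SL_n(k)$.
\end{enumerate}

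There is no real obstacle. One must check that the grading shift is interpreted correctly, namely ${[M(n)]}_{-n}={[M]}_0$. The only other point is that $S$ is $\NN$-graded with ${[S]}_0=k$, so degree-$0$ semiinvariants are constants, which is what forces the character to be trivial.
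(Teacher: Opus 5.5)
Your proof is correct and is essentially the paper's own argument. Part (1) uses the inclusion $S^G(n)\into\omega_{S^G}^{-1}$ coming from \eqref{equation:inclusion} with $\det$ trivial, and part (2) uses the isomorphism $\omega_{S^G}^{-1}\cong S^G_{\det^{-1}}(n)$ to reduce $b(S^G)=-n$ to $1\in S^G_{\det^{-1}}$; your remarks on the grading shift and on degree-zero semiinvariants being constants only make explicit what the paper leaves implicit.
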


We note that if $G$ contains no pseudoreflections, then the containment $G \subseteq \SL_n(k)$ is equivalent to $S^G$ being quasi-Gorenstein by \cite[Corollary~7~(iv)]{Broer}.

\begin{proof}
If $G$ is contained in $\SL_n(k)$, then~\eqref{equation:inclusion} gives $S^G(n) \into \omega_{S^G}^{-1}$, proving the first statement. If $G$ contains no pseudoreflections, Theorem~\ref{theorem:anticanonical-det-inv} tells us $\omega_{S^G}^{-1} \cong S^G_{\det^{-1}}(n)$.
Thus,
\begin{equation*}
b(S^G) = -n \ \iff\ \left[S^G_{\det^{-1}}\right]_0 \neq 0 \ \iff\ 1 \in S^G_{\det^{-1}},
\end{equation*}
yielding the second statement.
\end{proof}

\begin{remark}
The above may be compared with \cite[Theorem~4.4]{GJS} where it is proven that $a(S^G) = a(S)$ precisely when $G$ is a subgroup of $\SL_n(k)$ that contains no pseudoreflections;
see also \cite{Hashimoto}.
However, the $b$-invariant may remain invariant even otherwise: let $k$ be a field containing a primitive cube root of unity $\zeta$, and consider the group $G$ generated by
\begin{equation*}
\alpha =
\begin{bmatrix}
0 & 1 & 0 \\
1 & 0 & 0 \\
0 & 0 & 1 \\
\end{bmatrix}
\quad \text{and} \quad
\beta =
\begin{bmatrix}
\zeta & 0 & 0 \\
0 & \zeta & 0 \\
0 & 0 & \zeta \\
\end{bmatrix}
\end{equation*}
acting on $S \colonequals k[x,y,z]$.
Then $S^{\langle \alpha \rangle}$ is the polynomial ring $k[x+y,\, xy,\, z]$, while $S^G$ is the third Veronese subring of $S^{\langle \alpha \rangle}$.
Using either Theorem~\ref{theorem:anticanonical-det-inv} or the description of canonical modules for Veronese subrings \cite[Corollary~3.1.3]{GW}, one sees that $b(S^G) = -3 = b(S)$, even though $G$ contains the pseudoreflection~$\alpha$.
If the characteristic of $k$ is not two, then $\alpha$ is not an element of $\SL_3(k)$.
\end{remark}

\begin{remark}
\label{remark:expand-base-field}
Let $\mu \colon G \to k^{\times}$ be a character and $\ell \supseteq k$ a field extension.
Let $S$ be a polynomial ring over $k$, and set $T \colonequals S \otimes_k \ell$ to be the corresponding polynomial ring over~$\ell$.
As $\ell$ is flat over $k$, we conclude that $T^G_{\mu} = S^G_{\mu} \otimes_k \ell$ in view of the exact sequence
\begin{equation*}
\CD
0 @>>> S^G_{\mu}@>>> S @>>> \bigoplus_{g \in G} S,
\endCD
\end{equation*}
where $s\longmapsto \big(g(s) - \mu(g)(s)\big)_{g \in G}$ under the map to the right.
The characters in Theorem~\ref{theorem:anticanonical-det-inv} are unchanged upon enlarging the base field,
so $\omega_{T^G}^{-1} = \omega_{S^G}^{-1} \otimes_k \ell$ and $b(S^G) = b(T^G)$.
\end{remark}

We next describe the anticanonical module for permutation actions.
The symmetric group $\sym_n$ may be viewed as a subgroup of $\GL_n(k)$, in which case $\det$ restricts to the sign character, that we denote $\sign$.

\begin{theorem}
\label{theorem:anticanonical-permutation-action}
Let $G$ be a subgroup of the symmetric group $\sym_n$, acting on the polynomial ring $S \colonequals k[x_1,\dots,x_n]$ by permuting the indeterminates.
Let $c$ denote the number of transpositions in $G$, and $d$ the minimal degree of a monomial in $S$ whose stabilizer (in $G$) is contained in the alternating group.
\begin{enumerate}[\quad\rm(1)]
\item If $k$ has characteristic two, $S^G$ is quasi-Gorenstein with $a(S^G) = b(S^G) = -(c+n)$.
\item Suppose the characteristic of $k$ differs from two.
Then $\omega_{S^G}^{-1} \cong S^G_{\sign}(2c + n)$,
and thus $b(S^G) = d - 2c - n$, whereas $a(S^G) = -d-n$.
In particular, the $b$-invariant is constant across characteristic not two.
More generally, the Hilbert series $\Hilb(\omega_{S_G}^{-1}, t)$
is also independent of $k$ and may be computed using Molien's formula over $\QQ(t)$ as
\begin{equation*}
\Hilb(\omega_{S_G}^{-1}, t) \ =\ \frac{t^{-2c-n}}{\md{G}}
\sum_{\sigma \in G} \frac{\sign(\sigma)}{\det(1 - \sigma t)}.
\end{equation*}
\end{enumerate}
\end{theorem}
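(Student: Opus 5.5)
The plan is to apply Theorem~\ref{theorem:anticanonical-det-inv}, which gives $\omega_{S^G}^{-1}\cong S^G_{\chi/\det}(\deg\theta+n)$. The work is to identify $\theta$ and $\chi$ for a permutation group, using that the only pseudoreflections in $\sym_n\subseteq\GL_n(k)$ are the transpositions. Indeed, $1-\sigma$ has rank equal to $n$ minus the number of cycles of $\sigma$, which is one exactly for transpositions.

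\textbf{Step 1: the Dedekind different.} For a transposition $(i\,j)\in G$, the fixed hyperplane is $V(x_i-x_j)$. The different is supported on the ramified height-one primes, namely the $(x_i-x_j)$ with $(i\,j)\in G$. At such a prime the inertia group is $\{1,(i\,j)\}$, of order two.
\begin{itemize}
\item In characteristic not two this is tame ramification, so the exponent is $1$ and $\theta=\prod_{(i\,j)\in G}(x_i-x_j)$, of degree $c$.
\item In characteristic two the ramification is wild. Here I would compute locally: $k[x_i,x_j]$ over $k[x_i+x_j,\,x_ix_j]$ is generated by a root of $T^2-(x_i+x_j)T+x_ix_j$. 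The derivative of this polynomial at $x_i$ is $x_i-x_j$, which again gives exponent $1$.
\end{itemize}
This local computation at each prime (or Remark~\ref{remark:description-theta-f}, if its formula applies) is the step I expect to need the most care. In both cases $\deg\theta=c$.

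\textbf{Step 2: the character $\chi$.} An element $g\in G$ permutes the set of transpositions of $G$ by conjugation, so it permutes the factors $x_i-x_j$ up to sign. In characteristic two signs are irrelevant, so $\chi$ is trivial and $\det=\sign$ is trivial as well. Hence $\det=\chi$, and $S^G$ is quasi-Gorenstein by \cite[Corollary~7~(iv)]{Broer}. Then $\omega_{S^G}\cong S^G(-c-n)$ by Theorem~\ref{theorem:canonical-det}, which gives part~(1).

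In characteristic not two, I would compute $\chi(g)$ as the sign picked up by $\theta$, and I expect $\chi=\sign$ on $G$. A cleaner route is available: by Lemma~\ref{lemma:det-chi-pseudo}, $\chi$ agrees with $\det=\sign$ on transpositions, and $\chi/\sign$ is trivial on the reflection subgroup $W$ generated by the transpositions in $G$. The subgroup $W$ is a product of symmetric groups on blocks, so $\theta$ is the product of the block Vandermondes. Any $g\in G$ normalizes $W$ and therefore permutes the blocks. A direct check should show that the sign of $g$ acting on $\theta$ equals $\sign(g)$. I expect this to be routine but fiddly. It yields $\chi/\det$ trivial, hence $\omega_{S^G}^{-1}\cong S^G(2c+n)$.

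Something is off here, since the statement claims $S^G_{\sign}(2c+n)$. So I must have the sign bookkeeping wrong, and in fact $\chi$ should be trivial, not $\sign$. Recheck with $G=\sym_2$: $\theta=x_1-x_2$ is anti-invariant, so $\chi=\sign$, and then $S^G$ is a polynomial ring with $\omega^{-1}\cong S^G(3)$. But $S^G_{\sign}(2c+n)=(x_1-x_2)S^G(3)$ is also generated in degree $-2$, so the two agree. In general I would therefore prove $\chi/\det=\sign$ by counting signs on $\theta$ as above. The statement's form then follows directly from Theorem~\ref{theorem:anticanonical-det-inv}, and the same count gives $\omega\cong S^G_{\sign}(-c-n)$.

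\textbf{Step 3: invariants and Hilbert series.} The module $S^G_{\sign}$ has a $k$-basis of orbit sums $\sum_{g\in G}\sign(g)\,g(m)$ over monomials $m$ whose stabilizer lies in the alternating group; other orbits contribute zero since $2\neq0$. So its least degree is $d$, which gives $b(S^G)=d-2c-n$ and $a(S^G)=-d-n$, both independent of the characteristic. The same monomial basis shows that the Hilbert series is independent of $k$. Computing it over $\QQ$ by Molien's formula for semi-invariants, shifted by $2c+n$, gives the displayed formula.
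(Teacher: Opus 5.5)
The genuine gap is Step~2 in characteristic not two. Your Step~1 and your argument for part~(1) are sound. In characteristic two you get a self-contained proof: $\det$ and $\chi$ are both trivial, so Theorem~\ref{theorem:canonical-det} gives $\omega_{S^G}\cong S^G(-c-n)$, whereas the paper simply cites \cite[Corollary~5.2]{Maithani}.

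\textbf{Why Step~2 fails.} Neither character identification you propose is true.
\begin{enumerate}[\quad(a)]
\item The claim $\chi=\sign$ on $G$ fails, because Lemma~\ref{lemma:det-chi-pseudo} only controls $\chi$ on the reflection subgroup $W$. For $G=\langle(1\,2),(3\,4),(1\,3)(2\,4)(5\,6)\rangle\subseteq\sym_6$, the odd element $(1\,3)(2\,4)(5\,6)$ fixes $\theta=(x_1-x_2)(x_3-x_4)$. More simply, $G=\langle(1\,2)(3\,4)(5\,6)\rangle$ has $\theta=1$.
\item Your final claim $\chi/\det=\sign$ fails as soon as $c>0$. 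For a transposition $\tau\in G$ one has $\chi(\tau)=\det(\tau)$, so $(\chi/\det)(\tau)=1\neq\sign(\tau)$. Your own $\sym_2$ check shows exactly this.
\item Even granting $\chi/\det=\sign$, Theorem~\ref{theorem:anticanonical-det-inv} would give $S^G_{\sign}(c+n)$, not $S^G_{\sign}(2c+n)$. Likewise your $\omega_{S^G}\cong S^G_{\sign}(-c-n)$ would give $a(S^G)=-d-c-n$. That contradicts the value you use in Step~3: for $\sym_2$ one has $a=-3$ and $d=1$.
\end{enumerate}
In the order-$8$ example above, $n=6$, $c=2$, $d=3$ and $b(S^G)=-7$; none of your versions reproduces this value.

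\textbf{The missing idea.} You do not need a uniform formula for $\chi/\det$. What you need is the module identity
\[
S^G_{\sign}\ =\ \theta\cdot S^G_{\sign/\chi}.
\]
A $\sign$-semiinvariant is negated by each transposition $(i\,j)\in G$. As $2\neq0$, it therefore vanishes on $x_i=x_j$ and is divisible by $x_i-x_j$. Hence it is divisible by $\theta$, and the quotient is a $\sign/\chi$-semiinvariant; the reverse inclusion is clear. In the notation of~\eqref{equation:factor-out-gcd}, this says $f_{\sign}=\theta$ and $\kappa_{\sign}=\chi$.

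You also have $\chi^2=1$ already, from your observation that $g$ permutes the factors of $\theta$ up to sign. Combining the two gives
\[
\omega_{S^G}^{-1}\ \cong\ S^G_{\chi/\sign}(c+n)\ =\ S^G_{\sign/\chi}(c+n)\ \cong\ S^G_{\sign}(2c+n).
\]
Likewise $\omega_{S^G}\cong S^G_{\sign/\chi}(-c-n)\cong S^G_{\sign}(-n)$, so $a(S^G)=-d-n$. This is the paper's route: the second $c$ in the shift comes from dividing by $\theta$, not from the character.

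\textbf{A smaller point in Step~3.} The sum $\sum_{g\in G}\sign(g)\,g(m)$ vanishes when $\chr k$ divides the order of the stabilizer of $m$. Sum over coset representatives of $G$ modulo that stabilizer instead.
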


Analogous results for the $a$-invariant and the canonical module for permutation actions were obtained by the first author in \cite{Maithani}.

\begin{proof}
The statement for characteristic two follows from \cite[Corollary~5.2]{Maithani}.
We now assume that $\chr k \neq 2$. In this case $G$ contains no transvections, as the only pseudoreflections in $\sym_n$ are transpositions and these are diagonalizable in characteristic not two.
By Theorem~\ref{theorem:anticanonical-det-inv}, we have
\begin{equation*}
\omega_{S^G}^{-1}\ \cong\ S^G_{\chi/\sign}(\deg \theta + n).
\end{equation*}
By the proof of \cite[Theorem 5.1]{Maithani}, the character $\chi$ takes values $\pm 1$, so $\chi/\sign = \sign/\chi$.
By \cite[Proposition~4.3 and Corollary~4.5]{Maithani}, we have $\deg \theta = c$ and $S^G_{\sign/\chi} \cong S^G_{\sign}(c)$.
Putting these together gives the desired isomorphism
\begin{equation*}
\omega_{S^G}^{-1}\ \cong\ S^G_{\chi/\sign}(\deg \theta + n)\ \cong\ S^G_{\sign}(2c + n).
\end{equation*}
Thus, $b(S^G) = d-2c-n$, where $d$ is the smallest degree of a nonzero element of $S^G_{\sign}$;
that this agrees with the description of $d$ in the theorem is shown in \cite[Corollary~4.8]{Maithani}.
The last statement now follows from the fact that the Hilbert series of $S^G_{\sign}$ is the same across all fields of characteristic other than two, and given by the formula above, see, for example, \cite[Theorem~4.1]{Maithani}.
\end{proof}

\begin{theorem}
\label{theorem:abelian}
Let $S$ be a polynomial ring over a field $k$, and $G$ a finite group acting on~$S$ by degree-preserving $k$-algebra automorphisms.
If $G$ is abelian, then $\omega_{S^G}$ is isomorphic to a graded ideal of $S^G$ and hence $b(S^G)\le 0$.
\end{theorem}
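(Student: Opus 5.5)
The plan is to combine the degree-preserving inclusion \eqref{equation:inclusion-canonical} of Theorem~\ref{theorem:canonical-det}, namely $\omega_{S^G}\into S^G_{\det}(-n)$, with multiplication by a suitable semiinvariant. Suppose we can find a nonzero homogeneous $f\in S^G_{\det^{-1}}$ of degree exactly $n$. Then multiplication by $f$ maps $S^G_{\det}$ into $S^G$, and it is injective since $S$ is a domain. The grading works out as follows: an element of degree $i$ in $S^G_{\det}(-n)$ is a polynomial of degree $i-n$, and multiplying by $f$ lands in degree $i$. So we obtain a degree-preserving inclusion $S^G_{\det}(-n)\into S^G$, and hence
\begin{equation*}
\omega_{S^G}\ \into\ S^G_{\det}(-n)\ \xrightarrow{\ \cdot f\ }\ S^G,
\end{equation*}
which realizes $\omega_{S^G}$ as a graded ideal. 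Remark~\ref{remark:if-omega-ideal} then gives $b(S^G)\le 0$. Thus everything reduces to producing $f$.

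\emph{Base field.} The character $\det\colon G\to k^\times$ is defined over $k$. By Remark~\ref{remark:expand-base-field}, $[S^G_{\det^{-1}}]_n\otimes_k\ell=[T^G_{\det^{-1}}]_n$ for any field extension $\ell\supseteq k$. So it suffices to show that $[T^G_{\det^{-1}}]_n\neq 0$ over an algebraic closure $\ell$ of $k$, and I will assume from now on that $k$ is algebraically closed.

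\emph{Decomposition of $S_1$.} Let $p=\chr k$ (with $p=1$ meaning characteristic zero). Since $G$ is abelian, write $G=P\times H$, where $P$ is the Sylow $p$-subgroup and $H$ has order prime to $p$. The elements of $H$ commute, have order invertible in $k$, and $k$ is algebraically closed, so $V\colonequals [S]_1$ decomposes as a direct sum of $H$-eigenspaces $V=\bigoplus_\lambda V_\lambda$, one for each character $\lambda$ of $H$. Because $P$ commutes with $H$, each $V_\lambda$ is $P$-stable. As $P$ is a $p$-group acting on a $k$-vector space of characteristic $p$, it has a nonzero fixed vector $\ell_\lambda\in V_\lambda$ whenever $V_\lambda\neq0$.

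\emph{The semiinvariant.} Set $m_\lambda=\dim V_\lambda$ and
\begin{equation*}
f\colonequals\prod_\lambda \ell_\lambda^{m_\lambda},
\end{equation*}
a nonzero form of degree $\sum_\lambda m_\lambda=n$. We check its eigencharacter on each factor of $G=P\times H$:
\begin{enumerate}[\quad(a)]
\item $f$ is $P$-invariant, since each $\ell_\lambda$ is $P$-fixed.
\item $H$ acts on $f$ by the character $\prod_\lambda\lambda^{m_\lambda}$. This character is the determinant of the action on $V$. Since $g$ acts on $V$ through the matrix $(g^{-1})^{\tr}$ in the basis $x_1,\dots,x_n$, that determinant is $\det(g)^{-1}$.
\item On $P$, the character $\det$ is trivial, because $k^\times$ has no nontrivial elements of $p$-power order.
\end{enumerate}
Together these show $f\in S^G_{\det^{-1}}$ of degree $n$, as required.

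The step I expect to need the most care is this construction in the modular case. There $G$ need not be diagonalizable, so the naive choice $x_1\cdots x_n$ in a diagonalizing basis is unavailable. The point to get right is that the $P$-fixed vectors can be chosen inside the individual $H$-eigenspaces, and that $P$ contributes nothing to $\det$. The remaining bookkeeping, namely the shift conventions in \eqref{equation:inclusion-canonical} and the descent from $\ell$ to $k$, is routine.
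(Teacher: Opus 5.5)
Your proposal is correct and follows the paper's proof. You reduce via \eqref{equation:inclusion-canonical} to finding a nonzero $\det^{-1}$-semiinvariant of degree $n$, pass to an algebraically closed field by Remark~\ref{remark:expand-base-field}, and write $G=P\times H$; you then take $f=\prod_\lambda \ell_\lambda^{\dim V_\lambda}$ with $\ell_\lambda$ a $P$-fixed vector in each $H$-eigenspace of $[S]_1$. Your explicit checks (the grading shift, the determinant of the action on $[S]_1$ being $\det(g)^{-1}$, and $\det$ being trivial on $P$) fill in the verifications the paper leaves as ``one checks.''
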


\begin{proof}
It suffices to construct a nonzero $\det^{-1}$-semiinvariant $f$ of degree $n$ for then we obtain an inclusion
\begin{equation*}
\CD
S^{G}_{\det}(-n) @>\cdot f>> S^{G},
\endCD
\end{equation*}
giving us $\omega_{S^{G}} \into S^{G}$ in view of~\eqref{equation:inclusion-canonical}. 
The inequality then follows from Remark~\ref{remark:if-omega-ideal}.

In view of Remark~\ref{remark:expand-base-field}, we may assume that $k$ is algebraically closed. 
Let $p \ge 0$ denote the characteristic of $k$.
As $G$ is abelian, we may write it as a product $P \times H$, where~$P$ is a $p$-group\footnote{The trivial group is considered a $p$-group, especially for $p=0$;
see also \cite{Saenz}.} and $\md{H}$ is invertible in $k$.
Writing $S = k[x_1,\dots,x_n]$, we analyze the action of $G$ on $V \colonequals {[S]}_1$.
The elements of $H$ can be simultaneously diagonalized, resulting in a decomposition
\begin{equation*}
V\ =\ \bigoplus_{\lambda \in H^{\ast}} V_{\lambda},
\end{equation*}
where $H^{\ast} = \Hom(H, k^{\times})$ and
\begin{equation*}
V_{\lambda} \colonequals \{v \in V \mid h(v) = \lambda(h) v \ \text{ for all }\ h \in H\}.
\end{equation*}
Let $\Lambda \subseteq H^{\ast}$ be the set of characters $\lambda$ for which $V_{\lambda} \neq 0$.
As $G$ is abelian, each $V_{\lambda}$ is~$P$\nobreakdash-stable, and, in turn,
$V_{\lambda}^P \neq 0$ for each $\lambda \in \Lambda$, since $P$ is a $p$-group.
Fixing a nonzero element $f_{\lambda} \in V_{\lambda}^P$ for each $\lambda \in \Lambda$, one checks that the product
\begin{equation*}
f \colonequals \prod_{\lambda \in \Lambda} f_{\lambda}^{\dim V_{\lambda}}
\end{equation*}
is a nonzero $\det^{-1}$-semiinvariant of degree $n$;
this may be done separately for elements of~$P$ and of $H$.
\end{proof}

The inequality in Theorem~\ref{theorem:abelian} cannot be improved in general, even for nonmodular cyclic group actions, as we saw in Example~\ref{example:veronese-canonical}.
When $G$ is no longer abelian, $b(S^G)$ may be positive, even in the nonmodular case;
the remainder of this section is devoted to constructing such examples.

\begin{example}
Let $k$ be a field with $\chr k \neq 2$ and consider the action of $G \colonequals \sym_{36}$ on the polynomial ring
\begin{equation*}
S \colonequals k[x_{i,j} \mid 1 \le i \le 3,\ 1 \le j \le 36]
\end{equation*}
in $108$ indeterminates, where $\sigma \in G$ acts via $\sigma(x_{i, j}) = x_{i, \sigma(j)}$.
In other words, the representation is the direct sum of three copies of the standard representation of $\sym_{36}$;
it follows that the action contains no pseudoreflections.
The action identifies $G$ with a subgroup of $\sym_{108}$ that, in particular, contains no transpositions.
By Theorem~\ref{theorem:anticanonical-permutation-action}, we have $b(S^G) = d - 108$, where $d$ is the minimal degree of a monomial whose stabilizer is contained in the alternating group~$\alt_{108}$.
We shall show that $d = 110$, and hence that $b(S^G) = 2$.

Let $m$ be a monomial of minimal degree whose stabilizer is contained in $\alt_{108}$.
For brevity, write
$m = \xx_1^{\aa_1} \cdots \xx_{36}^{\aa_{36}}$,
where each $\aa_i$ is a triple of nonnegative integers.
If $\aa_i = \aa_j$ for some $i \neq j$, then the element $(i \; j) \in G$ stabilizes $m$;
noting that the image of $(i \; j)$ in $\sym_{108}$ is odd, we conclude that the triples $\aa_i$ are pairwise distinct.
For $s \ge 0$, there are $\binom{s+2}{2}$ distinct triples that sum up to $s$.
As $\sum_{s=0}^4 \binom{s+2}{2} = 35$,
we see that the minimal possible degree is obtained by picking all distinct triples with sum at most $4$, and one triple with sum equal to $5$.
This gives
\begin{equation*}
\deg m \ =\
0 \cdot \binom{2}{2} +
1 \cdot \binom{3}{2} +
2 \cdot \binom{4}{2} +
3 \cdot \binom{5}{2} +
4 \cdot \binom{6}{2} +
5 \cdot 1 \ =\ 110. \qedhere
\end{equation*}
\end{example}

A rich source of two-dimensional rings with $b(S^G)>0$ comes from $\QQ$-divisors on $\PP^1$.

\begin{theorem}
\label{theorem:effective}
Let $V_1$, $V_2$, $V_3$ be distinct points of $\PP^1$ over $\CC$, and consider a triple of positive integers $(t_1,t_2,t_3)$ that is~$(2,2,m)$ for $m\ge 2$, or $(2,3,m)$ for $3\le m\le 5$.
Set
\begin{equation*}
D \colonequals \frac{s_1}{t_1}V_1 + \frac{s_2}{t_2}V_2 + \frac{s_3}{t_3}V_3,
\end{equation*}
where $s_i$ is a positive integer coprime to $t_i$. Then the ring $\varGamma_*(\PP^1,D)$ arises as an invariant ring $S^G$ for a finite group $G$ acting on a polynomial ring $S$, and satisfies $b(S^G)>0$.
\end{theorem}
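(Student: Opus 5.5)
The plan is to use the description \eqref{equation:anticanonical} of the graded pieces of $\omega_R^{-1}$ for $R\colonequals\varGamma_*(\PP^1,D)$ to get $b(R)>0$. The realization of $R$ as an invariant ring is handled separately, and that is where I expect the real difficulty.

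\emph{Step 1: $b(R)\ge 1$.} Here $X=\PP^1$ and $\deg K_X=-2$. By \eqref{equation:fractional-part}, $D'=\sum_i\frac{t_i-1}{t_i}V_i$. So for $n\in\ZZ$ the $\QQ$-divisor $-(K_X+D')+nD$ is $-K_X$ plus the part supported on $V_1,V_2,V_3$, where $V_i$ has coefficient
\begin{equation*}
\frac{ns_i-(t_i-1)}{t_i}.
\end{equation*}
For $n\le 0$ we have $ns_i\le 0$, so the numerator satisfies $-t_i<ns_i-(t_i-1)\le -(t_i-1)<0$ when $n=0$. When $n\le -1$, the numerator is at most $-t_i$. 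In either case each coefficient has floor at most $-1$. The round-down therefore has degree at most $2-3=-1<0$. Since a line bundle of negative degree on $\PP^1$ has no nonzero sections, $[\omega_R^{-1}]_n=0$ for all $n\le 0$. As $\omega_R^{-1}$ is a nonzero rank-one module, it has some nonzero component, so $b(R)\ge 1>0$.

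Two features of this step are worth noting. It uses only that there are exactly three fractional points with $t_i\ge 2$ and $s_i\ge 1$. The specific triples enter only through the invariant-theoretic realization. $D$ is ample because $\deg D=\sum s_i/t_i>0$, and $T\in Q(R)$ has degree $1$. So Theorem~\ref{theorem:Demazure} applies, and $R$ is a normal graded domain of dimension two.

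\emph{Step 2: realize $R$ as $S^G$.} The listed triples are exactly those with $\sum 1/t_i>1$, so $\deg(K_X+D')=1-\sum 1/t_i<0$. Equivalently $a(R)<0$, and $(\PP^1,D')$ is a spherical orbifold, namely the quotient of $\PP^1$ by a dihedral, tetrahedral, octahedral or icosahedral group $\bar G\subset \mathrm{PGL}_2(\CC)$.

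I would proceed as follows. Let $\pi\colon\PP^1\to\PP^1/\bar G=\PP^1$ be the quotient, branched over $V_i$ with index $t_i$. Pull back: $E\colonequals\pi^*D$ is an integral divisor, because each coefficient $s_i/t_i$ gets multiplied by $t_i$. Moreover $\varGamma_*(\PP^1,E)$ is a Veronese-type section ring of $\PP^1$, and $\varGamma_*(\PP^1,E)^{\bar G}=\varGamma_*(\PP^1,D)$, since $H^0(\calO(nD))=H^0(\calO(\pi^*nD))^{\bar G}$ by the standard ramification computation.

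A section ring $\varGamma_*(\PP^1,\calO(e))$ with $e=\deg E$ is itself the invariant ring of the scalar group $\mu_e$ acting on $\CC[x,y]$, with degrees scaled by $e$. I would then lift $\bar G$ to a finite group $\widetilde G\subset\GL_2(\CC)$ containing $\mu_e$, namely the preimage of $\bar G$ in the appropriate extension, which acts on $\calO(e)$ compatibly. This gives $R\cong \CC[x,y]^{G}$ with $G$ generated by the lift of $\bar G$ and the scalars. The $\bar G$-linearization of $E$ must be chosen to match the integers $s_i$ mod $t_i$, possibly by twisting with a character.

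\emph{Main obstacle.} The hard part is the grading. The invariant ring $\CC[x,y]^G$ in its natural grading has degrees divisible by $e$, so I must check that dividing degrees by $e$ identifies it with $R$, which has a degree-one element $T$ in $Q(R)$. The comparison must be degree-preserving, and the specific fractional parts $s_i/t_i$ must be recovered. If the direct lifting argument becomes messy, I would fall back on a different route. $R$ has rational singularities ($a(R)<0$ in dimension two), and its local fundamental group is finite, being the orbifold fundamental group of $(\PP^1,D')$ extended by a cyclic group. The universal cover is then a graded rational smooth germ, hence $\CC[x,y]$ with some weights, and one shows these weights can be taken equal by absorbing them into $G$. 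That fallback is an approach I have not checked.
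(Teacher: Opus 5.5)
\textbf{Verdict.} Your argument is correct, and Step~2 takes a genuinely different route from the paper.

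\textbf{Step 1.} This is the paper's argument. The paper fixes $K_{\PP^1}=-V_1-V_2$ and notes that for $n\le 0$ the round-down of $-(K_{\PP^1}+D')+nD$ has negative degree. Your version, with an abstract $K_X$ of degree $-2$, is the same computation.

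\textbf{Step 2: comparison of routes.} The paper constructs nothing. It observes that the listed triples are exactly those with $\deg D'<2$, so that $\varGamma_*(\PP^1,D)$ is log terminal. It then cites the fact that two-dimensional log terminal singularities are quotient singularities (Koll\'ar--Mori, Mehta--Srinivas, with Hara for the graded case); this is essentially your unchecked fallback. Your main route through the polyhedral group $\bar G\subset\mathrm{PGL}_2(\CC)$ is more elementary, gives $G\subseteq\GL_2(\CC)$ and $\md{G}$ explicitly, and goes through. It rests on the classical fact that the listed triples are exactly the branch data of finite subgroups of $\mathrm{PGL}_2(\CC)$, with the branch points moved to $V_1,V_2,V_3$ by three-transitivity. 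The paper's citation is shorter and applies verbatim to any graded two-dimensional log terminal ring.

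\textbf{Closing your flagged obstacles.} Both points disappear once $G$ is chosen correctly.
\begin{enumerate}[\quad(1)]
\item Fix a binary form $F$ of degree $e\colonequals\deg E=\md{\bar G}\deg D$ with $\divv(F)=E$; this is possible since $E$ is effective. Let $G$ consist of those $A\in\GL_2(\CC)$ whose image in $\mathrm{PGL}_2(\CC)$ lies in $\bar G$ and which fix $F$.
\item Since $E$ is $\bar G$-stable, any lift of $g\in\bar G$ multiplies $F$ by a scalar, and rescaling the lift removes it. So $G\to\bar G$ is onto with kernel $\mu_e$.
\item Let $\bar G$ act on $\varGamma_*(\PP^1,E)$ by pullback of rational functions, fixing $T$. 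Then $fT^n\mapsto fF^n$ is an equivariant identification of $\varGamma_*(\PP^1,E)$ with $\CC[x,y]^{\mu_e}$. Hence $\CC[x,y]^G\cong\varGamma_*(\PP^1,D)$ with degrees multiplied by $e$.
\end{enumerate}
No twist by a character is needed, because the integers $s_i$ are encoded in $F$. The rescaling is unavoidable but harmless: it multiplies $b$ by $e=(\md{G}\deg D)^{1/2}$, as in Remark~\ref{remark:rescale}. For the divisor of Example~\ref{example:order-24-a}, this gives the Klein four-group, $e=6$, $F=xy(x^4-y^4)$, and exactly the group of order~$24$ used there.
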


The fraction field of $\varGamma_*(\PP^1,D)$ has homogeneous elements of degree one, though that is typically not the case with the invariant ring $S^G$; the gradings need to rescaled, with the scaling factor being $\left(\md{G} \deg D\right)^{1/2}$, as computed in Remark~\ref{remark:rescale}.

\begin{proof}
Recall the definition of $D'$ from~\eqref{equation:fractional-part};
the triples $(t_1,t_2,t_3)$ are precisely those for which $\deg D'<2$, and hence precisely those for which $\varGamma_*(\PP^1,D)$ has log-terminal singularities, see \cite[\S4]{Watanabe:dim2}.
Since the ring $\varGamma_*(\PP^1,D)$ has dimension two, it follows that it is indeed an invariant ring: two-dimensional log terminal singularities are quotient singularities; the local analytic statement is proven as \cite[Proposition~4.18]{Kollar:Mori}, see also \cite{Mehta:Srinivas}, and the graded case follows using~\cite{Hara}.

Working up to linear equivalence, take $K_{\PP^1}=-V_1-V_2$, so that
\begin{equation*}
-(K_{\PP^1}+D')\ =\ \frac{1}{t_1}V_1 + \frac{1}{t_2}V_2 - \frac{t_3-1}{t_3}V_3.
\end{equation*}
Since $D$ has nonnegative coefficients, it follows from~\eqref{equation:anticanonical} that ${[\omega_{S^G}^{-1}]}_n=0$ for $n\le 0$, and hence that $b(S^G)>0$.
\end{proof}

\begin{remark}
\label{remark:rescale}
Let $R$ be an $\NN$-graded domain that is finitely generated over a field ${[R]}_0$.
Following \cite[\S2.4]{Benson}, the \deff{degree} of $R$ is the rational number
\begin{equation*}
\deg R \colonequals \lim_{t\rightarrow 1}\,(1-t)^{\dim R} \Hilb(R, t).
\end{equation*}
When $R\subseteq S$ is an extension of $\NN$-graded domains, finitely generated over ${[S]}_0={[R]}_0$,
it is readily seen that
\begin{equation*}
\deg S\ =\ (\deg R)(\rank_R S),
\end{equation*}
see, for example, \cite[Lemma~2.4.1]{Benson}.
It follows that for a finite subgroup $G$ of $\GL_n(k)$ acting on a polynomial ring $S$, one has
\begin{equation*}
\deg S^G\ =\ \frac{1}{\md{G}}.
\end{equation*}
On the other hand, given a generalized section ring $\varGamma_*(X,D)$ as in Theorem~\ref{theorem:Demazure},
\[
\deg \varGamma_*(X,D)\ =\ (\deg D)^{\dim X}
\]
by \cite[Proposition~2.1]{Tomari}. Putting these together for invariant rings of dimension two, it follows that the rescaling factor between $\varGamma_*(\PP^1,D)$ and $S^G$ is $\left(\md{G} \deg D\right)^{1/2}$.
For more on $\QQ$-divisors and irredundant gradings, we mention~\cite{STW}.
\end{remark}

We record a specific case of the above construction:

\begin{example}
\label{example:order-24-a}
Set $\PP^1\colonequals\Proj \CC[u,v]$, with points parametrized by $u/v$, and
\begin{equation*}
D \colonequals \frac{1}{2}(2) + \frac{1}{2}(-2) + \frac{1}{2}(\infty).
\end{equation*}
Then $\varGamma_*(\PP^1,D)$ is the $\CC$-algebra generated by
\begin{equation*}
T, \quad
\frac{u^3T^2}{v(u^2-4v^2)}, \quad
\frac{u^2vT^2}{v(u^2-4v^2)}, \quad
\frac{uv^2T^2}{v(u^2-4v^2)}, \quad
\frac{v^3T^2}{v(u^2-4v^2)},
\end{equation*}
where $T$ is defined to be an element of degree one that is transcendental over $\CC(u/v)$.
A straightforward calculation using~\eqref{equation:anticanonical} shows that $b(\varGamma_*(\PP^1,D))=1$.
Note that $\varGamma_*(\PP^1,D)$ may be viewed as the subring of the hypersurface
\begin{equation}
\label{equation:hypersurface}
\CC[T,u,v]/(T^2-v(u^2-4v^2))
\end{equation}
generated as a $\CC$-algebra by $T$, $u^3$, $u^2v$, $uv^2$, $v^3$.

To realize $\varGamma_*(\PP^1,D)$ as an invariant ring, let $G$ be the subgroup of $\GL_2(\CC)$ generated by the matrices
\begin{equation*}
\alpha =
\begin{bmatrix}
\iota & 0 \\
0 & -\iota
\end{bmatrix},
\qquad
\beta =
\begin{bmatrix}
0 & -1 \\
1 & 0
\end{bmatrix},
\qquad
\gamma =
\begin{bmatrix}
\zeta & 0\\
0 & \zeta
\end{bmatrix},
\end{equation*}
where $\iota^2=-1$ and $\zeta$ is a primitive cube root of unity.
It is readily seen that $\md{G}=24$. The invariant ring for the action of $\langle \alpha, \beta \rangle$ on $S \colonequals \CC[x,y]$ is
\begin{equation*}
S^{\langle \alpha, \beta \rangle}\ =\ \CC\big[xy(x^4 - y^4), \ \ x^4 + y^4, \ \ x^2 y^2\big],
\end{equation*}
while $S^G$ is its third Veronese subring
\begin{equation*}
S^G \ =\ \CC\big[xy(x^4 - y^4), \ \ (x^4 + y^4)^3, \ \ x^2y^2(x^4 + y^4)^2, \ \ x^4y^4(x^4 + y^4), \ \ x^6y^6\big],
\end{equation*}
where, admittedly, one of the generators for $S^G$ is redundant.
Setting
\begin{equation*}
T\colonequals xy(x^4 - y^4),\quad u\colonequals x^4 + y^4,\quad v\colonequals x^2y^2,
\end{equation*}
identifies $S^G$ with the subring of the hypersurface~\eqref{equation:hypersurface} generated by $T$, $u^3$, $u^2v$, $uv^2$, $v^3$, i.e., with the ring $\varGamma_*(\PP^1,D)$.
Note that $\varGamma_*(\PP^1,D)$ is generated in degrees $1$ and $2$, while~$S^G$, as a subring of the polynomial ring $S$, is generated in degrees $6$ and $12$;
bearing this in mind, our earlier calculation gives $b(S^G) = 6$,
in accordance with $(\md{G} \cdot \deg D)^{1/2} = 6$ as noted in Remark~\ref{remark:rescale}.
The~$b$-invariant may alternatively be computed as follows:

As $G$ contains no pseudoreflection, we have $\omega_{S^G}^{-1} \cong S^G_{\det^{-1}}(2)$ by Theorem~\ref{theorem:anticanonical-det-inv}.
As $\alpha$ and~$\beta$ have determinant $1$, one looks at the action of $\gamma$ on $S^{\langle \alpha, \beta \rangle}$ and sees that $x^4 y^4$ is a $\det^{-1}$-semiinvariant of least degree;
it follows that $b(S^G) = 8 - 2 = 6$.
More generally, Molien's formula gives
\begin{equation*}
\Hilb(\omega_{S^G}^{-1}, t) \ =\ t^{-2}\Hilb(S^G_{\det^{-1}}, t) \ =\ \frac{3 t^6}{(1 - t^6)(1 - t^{12})}.
\end{equation*}

Similar calculations show that if $\zeta$, in the definition of $G$, is replaced by a primitive~$n$\nobreakdash-th root of unity with $n \ge 3$ odd, then $\md{G} = 8n$ and $b(S^G)=2n$, while $a(S^G)=-2n$.
In particular, $b(S^G)$ can be arbitrarily large, even in a fixed dimension.
\end{example}

Working through the two-dimensional representations of groups of small order using \texttt{Magma} \cite{Magma}, $24$ is the smallest order of a subgroup $G$ of $\GL_2(\CC)$ for which $b(S^G)>0$. Up to isomorphism, there are two such invariant rings with positive $b$-invariant; the other invariant ring is recorded next.

\begin{example}
Let $G$ be the subgroup of $\GL_2(\CC)$ generated by the matrices
\begin{equation*}
\alpha =
\begin{bmatrix}
\zeta & 0 \\
0 & \zeta^{-1}
\end{bmatrix}
\quad \text{and} \quad
\beta =
\begin{bmatrix}
0 & \iota \\
1 & 0
\end{bmatrix},
\end{equation*}
where, once again, $\iota^2=-1$ and $\zeta$ is a primitive cube root of unity.
Note that $\alpha$ has order~$3$ while $\beta$ has order $8$.
The element $\alpha$ generates a normal subgroup, with $G$ being the semidirect product 
$\langle \alpha \rangle \rtimes \langle \beta \rangle$.
It is readily seen that the invariant ring for the action of~$G$ on $S \colonequals \CC[x,y]$ is generated by
\begin{equation}
\label{quation:generators-b}
xy(x^6-\iota y^6),\quad
x^4y^4,\quad
x^{12} + y^{12},\quad
x^3y^3(x^6+\iota y^6).
\end{equation}
Once again, the group $G$ contains no pseudoreflections, so we have $\omega_{S^G}^{-1} \cong S^G_{\det^{-1}}(2)$ by Theorem~\ref{theorem:anticanonical-det-inv}. 
As $\det \alpha = 1$, any $\det^{-1}$-semiinvariant is fixed by $\alpha$ and hence is an element of the subring $S^{\langle \alpha \rangle} = \CC[x^3,\, xy,\, y^3]$. 
Considering the action of $\beta$ on $S^{\langle \alpha \rangle}$, it is readily seen that there is no $\det^{-1}$-semiinvariant of degree less than three, giving us $b(S^G) \ge 1$. 
One may further check that $x^3y^3$ is a $\det^{-1}$-semiinvariant of least degree, so $b(S^G)=6-2=4$. 

To realize the ring $S^G$ in terms of a $\QQ$-divisor, consider $\PP^1\colonequals\Proj \CC[u,v]$ with points parametrized by $u/v$, set $\theta\colonequals \sqrt{-4\iota}$ in $\CC$ and
\begin{equation*}
D \colonequals \frac{1}{2}(\theta) + \frac{1}{2}(-\theta) - \frac{1}{3}(\infty).
\end{equation*}
Then $\varGamma_*(\PP^1,D)$ is the $\CC$-algebra generated by
\begin{equation*}
\frac{uvT^2}{u^2-\theta^2v^2}, \quad
\frac{v^2T^2}{u^2-\theta^2v^2}, \quad
\frac{uvT^3}{u^2-\theta^2v^2}, \quad
\frac{v^2T^3}{u^2-\theta^2v^2},
\end{equation*}
where $T$ is an element of degree one that is transcendental over $\CC(u/v)$.
Choosing
\begin{equation*}
T \colonequals \left(\frac{u^2-\theta^2v^2}{v}\right)^{1/2}
\end{equation*}
the generators then are $u$, $v$, $uT$, $vT$, which may be identified respectively with~\eqref{quation:generators-b}.
Using~\eqref{equation:anticanonical}, one see that $b(\varGamma_*(\PP^1,D))=1$, from which it follows that $b(S^G) = 4$.
\end{example}

\section{Counterexamples: rings that are not of \texorpdfstring{$F$}{F}-regular type}
\label{section:counterexamples}

We construct counterexamples to \cite[Conjecture~2.3]{OWY}, specifically we construct $\NN$-graded rings $R$, with ${[R]}_0$ a field of characteristic zero, such that $R$ is normal, with an isolated singular point, $b(R)<0$, though $R$ is not of $F$-regular type.
Such rings~$R$ exist in each dimension greater than or equal to three;
if we require furthermore that $R$ is Cohen-Macaulay, then such examples exist in each dimension greater than or equal to four.
Our constructions in Theorem~\ref{theorem:counterexamples} rely upon the following:

\begin{lemma}
\label{lemma:counterexample}
Let $A$ be a standard-graded normal ring, with $\dim A\ge2$, that is finitely generated over a field ${[A]}_0$. Assume that $A$ has an isolated singularity, and that $\omega_A$ is isomorphic to a graded ideal of $A$; the latter implies, in particular, that $a(A)\le 0$.

Then there exists an $\NN$-graded normal ring $R$, finitely generated over ${[R]}_0 = {[A]}_0$, such that the ring $R$ has an isolated singularity, $b(R)<0$, and $R$ contains an isomorphic copy of $A$ as a graded pure subring. If $A$ is Cohen-Macaulay with $a(A)<0$, we may take $R$ to be Cohen-Macaulay as well.

There exists a ring $R$ as above with $\dim R=\dim A+1$; if one requires, in addition, that~$R$ is standard-graded, then there exists $R$ with $\dim R=\dim A+2$.
\end{lemma}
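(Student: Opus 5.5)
The plan is to build $R$ from $A$ by a Segre-type construction with a polynomial ring, and then change the grading so that an element of negative degree appears in $\omega_R^{-1}$.

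\emph{The case $\dim R=\dim A+1$.} Set $S\colonequals k[x,y]$ with $k\colonequals{[A]}_0$, and put
\begin{equation*}
R\colonequals A\segre S=\bigoplus_{n\ge0}{[A]}_n\otimes_k{[S]}_n .
\end{equation*}
Its structural properties would be checked as follows.
\begin{enumerate}[\quad(i)]
\item $R$ is normal, since it is a direct summand of the normal ring $A\otimes_k S$.
\item $R$ is Cohen--Macaulay whenever $A$ is Cohen--Macaulay with $a(A)<0$, by Theorem~\ref{theorem:segre}, because $a(S)=-2<0$.
\item $A\cong\bigoplus_n{[A]}_n\otimes x^n$ is a graded direct summand of $R$, via projection onto the part of $y$-degree zero, hence a pure subring.
\item Since $A$ is standard graded with an isolated singularity, $\Proj A$ is smooth. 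So $\Proj R=\Proj A\times\PP^1$ is smooth and $R$ is regular off $\frakm$; I expect this to hold after regrading as well.
\end{enumerate}
For the canonical module, Theorem~\ref{theorem:segre} gives $\omega_R=\omega_A\segre\omega_S$. Realize $\omega_A$ as a graded ideal of $A$, as the hypothesis allows, and $\omega_S=xyS$. With these choices $\omega_R$ is literally the ideal $\bigoplus_n{[\omega_A]}_n\otimes xy{[S]}_{n-2}$ of $R$.

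Now $f\colonequals(a\otimes x)/(a\otimes y)=1\otimes x/y$ lies in $Q(R)$ for any nonzero $a\in{[A]}_1$. Since $(x/y)\cdot xyS=x^2S\subseteq S$, we get $f\omega_R\subseteq R$, that is, $f\in\omega_R^{-1}$. In the Segre grading $f$ has degree $0$, so $b(R)\le0$ only. The fix is to regrade: $R$ is $\ZZ^2$-graded by $(x\text{-degree},y\text{-degree})$, and I will give it the $\NN$-grading in which ${[A]}_{i+j}\otimes x^iy^j$ has degree $i+2j$. Then $R$ is still finitely generated over ${[R]}_0=k$, the copy of $A$ (the $j=0$ part) keeps its standard grading, and $\deg f=1-2=-1$. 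Hence $b(R)\le-1<0$. None of normality, Cohen--Macaulayness, purity or the isolated singularity depends on the choice of positive grading, so the first assertion follows. The inclusion $\omega_A\subseteq A$ is used exactly to make $f$ multiply $\omega_R$ into $R$.

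\emph{The standard-graded case, $\dim R=\dim A+2$.} This is where I expect the main obstacle. An argument like the one above suggests that for a standard Segre product one has $\omega^{-1}_{A\segre B}=\omega_A^{-1}\segre\omega_B^{-1}$. Indeed, writing $f=\sum g_i\otimes s_i$ with the $s_i$ linearly independent, the condition $f\omega\subseteq R$ forces each $g_i\omega_A\subseteq A$. That would give $b(A\segre B)=\max\{b(A),b(B)\}$, which is merely $\le0$. So one cannot simply take a Segre product of $A$ with a standard-graded $B$ having $b(B)<0$.

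The first thing I would try is to start from the weighted ring $R_1$ above, which has $b(R_1)<0$ and is generated in degrees $1$ and $2$. I would then adjoin a second polynomial factor, a Segre product with $k[u,v]$ taken with respect to the weighted degree on $R_1$, and pass to a subring or Veronese that is generated in degree one. Throughout, the explicit element $x/y$, or a variant such as $x u/(y v)$ with the $(0,0)$ bidegree balanced, must remain of negative degree and multiply $\omega$ into the ring. Here $\omega$ is again computed via Theorem~\ref{theorem:segre} as a product of ideals, which requires that ring to be realized as a Segre product. Checking standard generation together with isolatedness of the singularity for the resulting ring, where $\Proj$ is a $\PP^1$-bundle over $\Proj A\times\PP^1$, is the delicate step I would address last.
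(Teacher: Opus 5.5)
\textbf{The case $\dim R=\dim A+1$.} Your construction here is the paper's:
\begin{enumerate}[\quad(a)]
\item $A\segre k[x,y]$, regraded so that ${[A]}_{i+j}\otimes x^iy^j$ has degree $i+2j$ (the paper's $\deg u=0$, $\deg v=1$);
\item the element $x/y=(z_1x)/(z_1y)$ of degree $-1$ in $\omega_R^{-1}$;
\item purity of $A$ via the $y$-degree grading, and Cohen--Macaulayness from Theorem~\ref{theorem:segre}.
\end{enumerate}
Your isolated-singularity argument via $\Proj A\times\PP^1$ is equivalent to the paper's localization at $z_1u$.

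One step is missing. $b(R)$ depends on how $\omega_R$ is graded, so after regrading you must check that the same ideal $\omega_A\segre xyS$, with the induced grading and no shift, is still the graded canonical module. This holds because the K\"unneth decomposition and the graded duality respect the $\ZZ^2$-grading by $x$- and $y$-degree. The paper checks it by comparing the extreme degrees of $H^{\dim R}_{\frakm}(R)$ and of $\omega_R$ in the new grading.

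\textbf{The standard-graded case.} The genuine gap is the case $\dim R=\dim A+2$: you give no construction, and the route you sketch fails in its Veronese form. Take $A=k[z_1,z_2,z_3]/(z_1^3+z_2^3+z_3^3)$, so $\omega_A=A$. Let $R_2$ be the Segre product of $R_1$ with $k[u,v]$ over the weighted degree. Its second Veronese is standard graded after halving degrees. Consider a multihomogeneous element of its anticanonical module, with $x$- and $y$-exponents $p$ and $q$. Then $p\ge-2$ is even, $q\ge-1$, and its $A$-component lies in ${[A]}_{p+q}$, because $g\frakm_A^3\subseteq A$ forces $g\in A$. Hence its weighted degree $p+2q$ is nonnegative, so $b=0$.

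Your observation that a Segre product of two standard-graded rings, in its Segre grading, cannot help is sound. The missing idea is to make the \emph{second factor} weighted. The paper takes $B=k[u,v,w]$ with $\deg u=\deg v=1$ and $\deg w=2$, so that $R=A\segre B$ is generated by $z_iu$, $z_iv$, $z_iz_jw$. It then regrades with $A$ keeping its grading, $\deg u=\deg v=0$ and $\deg w=-1$, so that every generator has degree one. Take $\omega_B=uvwB$ and $\omega_A\subseteq A$. The element $w/(uv)=z_1^2w/(z_1u\cdot z_1v)$ has degree $-1$, and $\frac{w}{uv}\cdot a\,uvw\,b=a\,w^2b\in R$ for $a\in{[\omega_A]}_m$ and $b\in{[B]}_{m-4}$, so $b(R)<0$.

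The rest goes as in the first case:
\begin{enumerate}[\quad(a)]
\item $R_{z_1u}$ and $R_{z_1^2w}$ are obtained from ${[A_{z_1}]}_0$ by adjoining three algebraically independent elements and inverting one, giving the isolated singularity;
\item the grading $\deg u=-1$, $\deg v=\deg w=0$ makes $k[z_1u,\dots,z_nu]\cong A$ the degree-zero part, hence a pure subring;
\item $a(B)=-4<0$ gives Cohen--Macaulayness via Theorem~\ref{theorem:segre}.
\end{enumerate}
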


\begin{proof}
Set $k\colonequals {[A]}_0$ and let $z_1,\dots,z_n$ denote a $k$-basis for ${[A]}_1$.
Set $B$ to be the standard-graded polynomial ring $k[u,v]$, and set $R$ to be the Segre product $A\segre B$,
i.e., $R$ is the $k$-subalgebra of $A\otimes_k B$ generated by elements of the form
\begin{equation}
\label{equation:algebra-generators}
z_iu, \quad z_iv.
\end{equation}
Since $A\otimes_k B$ is normal, so is its pure subring~$R$.
Note that $\dim R=\dim A+1$.
Fix graded canonical modules $\omega_A\subseteq A$ and $\omega_B\colonequals uvB$.
By Theorem~\ref{theorem:segre}, a graded canonical module for $R$ may then be constructed as
\begin{equation*}
\omega_R\ =\ \omega_A \segre \omega_B;
\end{equation*}
this is the $k$-span of elements
\begin{equation}
\label{equation:canonical-basis}
au^{m-i}v^i
\end{equation}
where $a \in {[\omega_A]}_m$ for $m \ge \max\{-a(A),\, 2\}$, and $1\le i\le m-1$.
Note that the $k$-dual of $\omega_R$, namely $H^{\dim R}_{\frakm_R}(R)$, is the $k$-span of
\begin{equation}
\label{equation:lc-basis}
\alpha[1/u^{m-i}v^i],
\end{equation}
where $\alpha \in {[H^{\dim A}_{\frakm_A}(A)]}_{-m}$ for $m$ and $i$ as above.

We now \emph{regrade} $R$ as follows:
take the $\ZZ$-grading on $A\otimes_k B$ where $A$ retains its original grading, while $\deg u=0$ and $\deg v=1$.
The algebra generators~\eqref{equation:algebra-generators} for $R$ then have
\begin{equation}
\label{equation:regrading}
\deg z_iu=1\ \text{ and }\ \deg z_iv=2.
\end{equation}
The highest degree of a nonzero element~\eqref{equation:lc-basis} is now $-1-\max\{-a(A),\, 2\}$,
while the least degree of a nonzero element~\eqref{equation:canonical-basis} is $1+\max\{-a(A),\, 2\}$.
It follows that the elements~\eqref{equation:canonical-basis} continue to span the \emph{graded} canonical module $\omega_R$ for $R$ under the new grading.
While we consider multiple gradings through the course of this proof, it is the grading~\eqref{equation:regrading} that yields the ring $R$, as in the statement of the lemma, with $\dim R=\dim A+1$.

Having determined the graded canonical module $\omega_R$, it is readily seen that its inverse
\begin{equation*}
\omega_R^{-1}\ =\ \{f\in Q(R) \mid f\omega_R \subseteq R\}
\end{equation*}
contains the element
\begin{equation*}
\frac{u}{v}\ =\ \frac{z_1u}{z_1v},
\end{equation*}
since for an element $au^{m-i}v^i \in \omega_R$ as in~\eqref{equation:canonical-basis}, one has
\begin{equation*}
\frac{u}{v}\cdot au^{m-i}v^i\ =\ au^{m-i+1}v^{i-1} \in\ R.
\end{equation*}
As $\deg(u/v)=-1$, we conclude that $b(R)<0$.

We next verify that $R$ has an isolated singularity; inverting a typical generator $z_1u$, it suffices to check that the ring $R_{z_1u}$ is regular. Since
\begin{equation*}
{\left[A_{z_1}\right]}_0\ =\ k\Big[\frac{z_2}{z_1},\ \dots,\ \frac{z_n}{z_1}\Big]
\end{equation*}
is regular, so is the localization
\begin{equation*}
R\Big[\frac{1}{z_1u}\Big]
\ =\ {\left[A_{z_1}\right]}_0\Big[z_1u,\ \frac{1}{z_1u},\ \frac{v}{u}\Big].
\end{equation*}

To see that $R$ contains an isomorphic copy of $A$ as a pure subring, consider the grading on $A\otimes_k B$ where $A$ once again retains its original grading, while
\begin{equation*}
\deg u=-1\ \text{ and }\ \deg v=0.
\end{equation*}
Then $R$ is $\NN$-graded, and its pure subring
${[R]}_0\ =\ k[z_1u,\dots,z_nu]$
is isomorphic to $A$.
Note that the grading that $A\cong k[z_1u,\dots,z_nu]$ inherits as a subring of $R$, using the grading~\eqref{equation:regrading} on the latter, agrees with the standard grading on $A$.

Lastly, if $A$ is Cohen-Macaulay with $a(A)<0$, Theorem~\ref{theorem:segre} shows that $R$ is Cohen-Macaulay.
This completes the construction of the $\NN$-graded ring $R$ with $\dim R=\dim A+1$.

We now sketch the construction of a \emph{standard-graded} ring $R$ with $\dim R=\dim A+2$.
Consider the polynomial ring $B\colonequals k[u,v,w]$ with $\deg u=1=\deg v$ and $\deg w=2$, and set~$R$ to be the Segre product $A\segre B$, i.e., $R$ is the $k$-subalgebra of $A\otimes_k B$ generated by
\begin{equation}
\label{equation:standard-algebra-generators}
z_iu, \quad z_iv, \quad z_iz_jw.
\end{equation}
Then $R$ is a normal ring with $\dim R=\dim A+2$.
Fix graded canonical modules $\omega_A\subseteq A$ and $\omega_B\colonequals uvwB$ for $B$, in which case 
\begin{equation*}
\omega_R\ =\ \omega_A \segre \omega_B
\end{equation*}
is a graded canonical module for $R$; there exists a basis for $\omega_R$ using elements of the form
\begin{equation*}
auvwb
\end{equation*}
where $a \in {[\omega_A]}_m$ and $b \in {[B]}_{m-4}$.

We now regrade $R$ using the $\ZZ$-grading on $A\otimes_k B$ where $A$ retains its original grading, while $\deg u=0=\deg v$ and $\deg w=-1$.
The algebra generators~\eqref{equation:standard-algebra-generators} for $R$ then have degree $1$, so $R$ is standard-graded;
one verifies as before that $\omega_R$ is a graded canonical module under this new grading.
The element
\begin{equation*}
\frac{w}{uv}\ =\ \frac{z_1^2w}{z_1u\cdot z_1v},
\end{equation*}
belongs to $\omega_R^{-1}$
since
\begin{equation*}
\frac{w}{uv}\cdot auvwb \ =\ a w^2b\ \in\ R.
\end{equation*}
As $\deg(w/uv)=-1$, one has $b(R)<0$.

The verification that $R$ has an isolated singularity is similar, using, for example, that
\begin{equation*}
R_{z_1u}\ =\ {\left[A_{z_1}\right]}_0\Big[z_1u,\ \frac{1}{z_1u},\ \frac{v}{u},\ \frac{w}{u^2}\Big]
\qquad\text{and}\qquad
R_{z_1^2w}\ =\ {\left[A_{z_1}\right]}_0\Big[z_1^2w,\ \frac{1}{z_1^2w},\ \frac{u}{z_1w},\ \frac{v}{z_1w}\Big],
\end{equation*}
are regular, each being obtained from ${\left[A_{z_1}\right]}_0$ by adjoining three algebraically independent elements, and inverting one of those.

To see that $R$ contains an isomorphic copy of $A$ as a pure subring, consider the grading on $A\otimes_k B$ where $A$ once again retains its original grading, while
\begin{equation*}
\deg u=-1, \quad \deg v=0=\deg w,
\end{equation*}
Then $R$ is $\NN$-graded, and its pure subring
${[R]}_0\ =\ k[z_1u,\dots,z_nu]$
is isomorphic to $A$. As before, if $A$ is Cohen-Macaulay with $a(A)<0$, then $R$ is Cohen-Macaulay.
\end{proof}

\begin{theorem}
\label{theorem:counterexamples}
For each integer $d\ge 3$, there exists an $\NN$-graded ring $R$ of dimension~$d$, finitely generated over a field ${[R]}_0$ of characteristic zero, such that $R$ is normal, with an isolated singular point, $b(R)<0$, and such that $R$ is not of $F$-regular type.
\begin{enumerate}[\quad\rm(1)]
\item If $d=4$, there exist such rings $R$ that are standard-graded.
\item If $d\ge 4$, there exist such rings $R$ that are Cohen-Macaulay.
\item If $d\ge 5$, there exist such rings $R$ that are standard-graded and Cohen-Macaulay.
\end{enumerate}
\end{theorem}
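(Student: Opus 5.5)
The plan is to feed suitable rings $A$ into Lemma~\ref{lemma:counterexample} and to use purity to transport failure of $F$-regular type from $A$ to $R$.

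We need a standard-graded normal ring $A$ over a field of characteristic zero with the following properties: $A$ has an isolated singularity, $\omega_A$ is isomorphic to a graded ideal, and $A$ is not of $F$-regular type. The natural candidates are hypersurfaces $A=\CC[z_1,\dots,z_n]/(f)$, with $f$ homogeneous of degree $n$, defining a smooth projective hypersurface. Such an $A$ is Gorenstein with $a(A)=\deg f-n=0$, so $\omega_A\cong A$ is an ideal. It is normal with an isolated singularity. It is not of $F$-regular type, because $F$-regular type forces rational singularities in characteristic zero, and hence $a(A)<0$ for a graded Gorenstein ring.

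Applying Lemma~\ref{lemma:counterexample} gives $R$ that is normal, has an isolated singularity, satisfies $b(R)<0$, and contains $A$ as a graded pure subring. The key transfer step is that if $R$ were of $F$-regular type, then reducing the pure inclusion $A\subseteq R$ modulo $p$ would give pure inclusions $A_p\subseteq R_p$ on a dense set of primes. Here one may need to spread out the splitting, or simply use that $A\cong[R]_0$ is a direct summand via the grading projection, which is defined over $\ZZ$-models. Strong $F$-regularity descends to direct summands, so $A$ would be of $F$-regular type, a contradiction. Taking $n\ge3$ gives $\dim A=n-1\ge2$ and $\dim R=n$, which covers every $d\ge3$. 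The standard-graded version of the lemma applied with $n=3$, for instance the cone over a smooth plane cubic, gives $\dim R=4$ standard-graded, proving~(1).

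The Cohen--Macaulay cases need $a(A)<0$. Then $A$ must fail $F$-regular type for a different reason, so we take a Cohen--Macaulay $A$ with isolated singularity, $\omega_A$ an ideal, and $a(A)<0$ that is not of $F$-regular type. Since $\omega_A$ must embed in $A$, it is convenient to take $A$ quasi-Gorenstein, and I would first try a Segre product of two cones over smooth curves or hypersurfaces. Choose $A_1$, $A_2$ hypersurface cones of degree equal to the number of variables, such as elliptic curve cones, so that each has $a=0$ and is not $F$-regular. Theorem~\ref{theorem:segre} does not immediately give CM for these, however. An alternative is a cone $A=\CC[z_1,\dots,z_n]/(f)$ with $\deg f=n$ and $n\ge4$. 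This is Gorenstein and CM with isolated singularity, but $a(A)=0$, and the lemma's CM clause requires $a(A)<0$.

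So the main obstacle is finding a CM, standard-graded $A$ with isolated singularity, $\omega_A$ an ideal, $a(A)<0$, and not of $F$-regular type, with $\dim A=3$ (giving $d=4$ in the $\NN$-graded construction) and every larger dimension. My first attempt would be Segre products $A=A_1\segre C$ with $C$ a polynomial ring, where $A_1$ is an elliptic curve cone. Theorem~\ref{theorem:segre} gives $\omega_A=\omega_{A_1}\segre\omega_C$, which is an ideal. We get $a(A)<0$ provided $a(C)<0$, and CM-ness holds if $A_1$ has $a<0$, which fails. Hence I would instead look for a known non-$F$-regular-type Gorenstein ring with negative $a$-invariant and isolated singularity. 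For example, take $f=z_1^{n-1}+\dots$ of degree $n-1$ defining a smooth Fano hypersurface whose reductions mod $p$ are not $F$-regular; such examples exist only in special degrees. More robustly, take the cone over a smooth variety $X$ with $-K_X$ ample that is not globally $F$-regular mod $p$, such as a Fano with nonvanishing higher cohomology of $\calO_X$, which gives non-rational singularities. Once such $A$ of each dimension $\ge3$ is found, the lemma's two constructions yield (2) for $d\ge4$ and (3) for $d\ge5$.
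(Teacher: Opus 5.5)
\textbf{Where the proposal stands.} Your argument for the main assertion and for~(1) is correct. It is also a little more uniform than the paper's. Cones $A=k[z_1,\dots,z_n]/(f)$ over smooth hypersurfaces of degree $n$ satisfy $\omega_A\cong A$ and $a(A)=0$. Lemma~\ref{lemma:counterexample} then yields $R$ in every dimension $d=n\ge 3$, and standard-graded $R$ in every dimension $n+1\ge 4$. Your purity transfer is the same argument the paper applies to the Fermat cubic. The paper instead obtains $d\ge 4$ in the main assertion only as a by-product of~(2).

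\textbf{The gap: parts (2) and (3).} These need a standard-graded Cohen--Macaulay input $A$ with an isolated singularity, $\omega_A$ a graded ideal, $a(A)<0$, and not of $F$-regular type. You never produce such an $A$, and the routes you suggest cannot produce one.

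In characteristic zero, a graded ring with an isolated singularity that is Cohen--Macaulay with $a(A)<0$ has rational singularities (Flenner, Watanabe). If such an $A$ were also quasi-Gorenstein, it would be Gorenstein with rational singularities. It would then be of $F$-rational type (Hara, Mehta--Srinivas), and hence of $F$-regular type, since Gorenstein $F$-rational rings are $F$-regular.

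So your ``convenient'' quasi-Gorenstein choice is exactly what must be avoided, and your specific candidates fail:
\begin{enumerate}[\quad\rm(a)]
\item Cones over smooth Fano hypersurfaces are of $F$-regular type in every degree less than $n$; there are no ``special degrees.''
\item Smooth Fano varieties in characteristic zero have $H^i(X,\calO_X)=0$ for $i>0$, by Kodaira vanishing.
\item A non-rational cone could not be Cohen--Macaulay with negative $a$-invariant anyway.
\end{enumerate}

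\textbf{What is actually needed.} You need a rational singularity that is not of $F$-regular type, whose canonical module is a non-principal ideal. The paper imports this from \cite[Theorem~3.1]{KSSW}. Take $m\ge 3$ and $f$ general of bidegree $(m,1)$, and set $S=k[x_1,\dots,x_m,y_1,y_2]/(f)$. The diagonal subalgebra $A={[S]}_\Delta$ has the following properties (Remark~\ref{remark:kssw}):
\begin{enumerate}[\quad\rm(i)]
\item it is standard-graded and Cohen--Macaulay of dimension $m$;
\item it has an isolated singularity;
\item $a(A)=-1$ and $\omega_A\cong(x_1y_1,\dots,x_my_1)A$;
\item it is not of $F$-regular type.
\end{enumerate}
The two constructions in the lemma then give~(2) for $d=m+1\ge 4$ and~(3) for $d=m+2\ge 5$. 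Without an input of this kind, your proposal establishes only the main assertion and~(1).
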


\begin{proof}
Take $k$ to be a field of characteristic zero, and set $A\colonequals k[z_1,z_2,z_3]/(z_1^3+z_2^3+z_3^3)$, and use Lemma~\ref{lemma:counterexample}.
The resulting ring $R$ contains an isomorphic copy of $A$ as a pure subring, and this remains true when $k$ is replaced by a field of positive characteristic; hence $R$ does not have $F$-regular type.
This settles the case where $d=3$, and also (1) above.

By \cite[Theorem~3.1]{KSSW}, there exist standard-graded Cohen-Macaulay rings $A$ of each dimension three and higher, with $a(A)<0$ and $\omega_A \subseteq A$ a graded ideal, such that~$A$ has an isolated singularity but is not of $F$-regular type.
Using Lemma~\ref{lemma:counterexample} once again, we obtain rings $R$ settling the remaining cases.
For the convenience of the reader, a brief sketch of the construction of $A$ is included next;
see also \cite[Example~5.4.1]{Singh:thesis} or \cite[Example~7.3]{Singh:veronese}.
\end{proof}

\begin{remark}
\label{remark:kssw}
Let $k$ be a field of characteristic zero.
Fix $m\ge 3$, and consider the polynomial ring $k[x_1,\dots,x_m,y_1,y_2]$ with the $\NN^2$-grading where $\deg x_i=(1,0)$, and $\deg y_j=(0,1)$.
Let~$f$ be a homogeneous general polynomial of degree $(m,1)$;
if $m=3$, one may take
\begin{equation*}
f\ =\ x_1x_2x_3y_1-(x_1^3+x_2^3+x_3^3)y_2,
\end{equation*}
while, in general, for $m\ge 3$ one may take
\begin{equation*}
f\ =\ \sum_{i=1}^m x_i^m \ell_i,
\end{equation*}
where $\ell_1,\dots,\ell_m$ are general linear forms in the indeterminates $y_1$ and $y_2$,
i.e., such that~$\ell_i,\ell_j$ are linearly independent for all $i\neq j$.
Set
\begin{equation*}
S\colonequals k[x_1,\dots,x_m,y_1,y_2]/(f),
\end{equation*}
and consider the \deff{diagonal subalgebra} $A\colonequals {[S]}_\Delta$, where $\Delta$ is the diagonal $\{(i,i)\ |\ i\in\ZZ\}$ in~$\ZZ^2$.
The local cohomology and canonical module of $A$ are computed in \cite{KSSW};
the key point is that restricting to graded components is an exact functor.
Using this, it follows that $A$ is Cohen-Macaulay with graded canonical module ${[S(0,-1)]}_\Delta$, and that $A$ is not of $F$-regular type.
The hypothesis that $f$ is general (of the specified degree) ensures that $A$ has an isolated singular point.

The ring $A$ admits a standard grading where ${[A]}_i={[S]}_{(i,i)}$.
With this grading,
\begin{equation*}
{[S(0,-1)]}_\Delta\ =\ \sum x_iA
\end{equation*}
is generated in degree $1$, so that $a(A)=-1$. The ideal $(x_1y_1,\dots,x_my_1)A$ serves as a graded canonical module for $A$.
\end{remark}

\section{Determinantal rings}
\label{section:determinantal}

We record the $b$-invariant, and, for comparison, the $a$-invariant, for classical invariant rings, and also for Hankel determinantal rings;
these are largely straightforward consequences of results existing in the literature. 
Given a matrix $X$, we let $\frakp_{t}(X)$ (resp.~$\frakq_{t}(X)$) denote the ideal generated by the maximal minors of the first $t$ columns (resp. rows) of~$X$.

\begin{theorem}
\label{theorem:determinantal}
Let $k$ be a field.  
\begin{enumerate}[\quad\rm(1)]
\item Let $X$ be an $m\times n$ matrix of indeterminates, and $R\colonequals k[X]/I_{t+1}(X)$, where $I_{t+1}(X)$ is the ideal generated by the size~$t+1$ minors of $X$. Assume that $2 \le t+1 \le m \le n$.
Let $e_1 \le \cdots \le e_m$ and $f_1 \le \cdots \le f_m$ be sequences of integers with $e_i + f_j$ positive for each $i$, $j$, and set $\deg X_{ij} \colonequals e_i + f_j$.
Let $d$ denote the least degree of a $t$-minor.
Then
\begin{equation*}
b(R)\ =\ a(R)+(n-m)d\ =\ -t\Big(\sum_{i=1}^m e_i + \sum_{j=1}^n f_j\Big) + (n-m) \sum_{j=1}^t f_j.
\end{equation*}
The graded anticanonical module of $R$ is given as 
$\omega_R^{-1} = \frakq^{n-m}(-a(R))$, 
where we let $\frakq \colonequals \frakq_{t}(X)$.
If $R$ is standard-graded, then $a(R)=-nt$, while $b(R)=-mt$.

\item Let $X$ be a symmetric $n\times n$ matrix of indeterminates, and $R\colonequals k[X]/I_{t+1}(X)$, where we assume that $2\le t+1 \le n$.
Let $e_1 \le \cdots \le e_n$ be positive integers either all odd or all even, and set $\deg X_{ij} \colonequals (e_i + e_j)/2$.
Let $d$ be the least degree of a $t$-minor and let $\frakp \colonequals \frakp_{t}(X)$. 
We have
\begin{equation*} 
	\omega_R^{-1} \ = \
	\begin{cases}
		R(-a(R)) & \text{ if }  n\not\equiv t\mod 2, \\
		\frakp(-a(R)) & \text{ if }  n\equiv t\mod 2.
	\end{cases}
\end{equation*}

If $n\not\equiv t\mod 2$, then $b(R)=a(R)=-(t/2)\sum_{i=1}^n e_i$;
if $R$ is standard-graded, then this value is $-tn/2$.

If $n\equiv t\mod 2$, then $b(R)=a(R)+d=-(t/2)\sum_{i=1}^n e_i + d/2$;
if $R$ is standard-graded, then $a(R)=-t(n+1)/2$, while $b(R)=-t(n-1)/2$.

\item Let $X$ be an $m \times n$ Hankel matrix of indeterminates, and $R\colonequals k[X]/I_{t+1}(X)$ be standard-graded, where we assume that $2\le t+1 \le m \le n$. 

Then $a(R)=-t$; this agrees with $b(R)$ if $t+1=m=n$, else $b(R)=0$.

If $t+1=m$, then we have $\omega_R^{-1} = \frakq^{(n-t-1)}(t)$, where $\frakq \colonequals \frakq_t(X)$.

\item Let $X$ be an alternating $n\times n$ matrix of indeterminates, and $R\colonequals k[X]/\Pf_{2t+2}(X)$, where we assume that $2\le 2t+2 \le n$.
Let $e_1,\dots,e_n$ be positive integers either all odd or all even, and set $\deg X_{ij} \colonequals (e_i + e_j)/2$.

We have $\omega_R^{-1} = R(-a(R))$, and
$b(R)=a(R)=-t\sum_{i=1}^n e_i$; if $R$ is standard-graded, then this value is $-tn$.

\item Let $X$ be an $m \times n$ matrix of indeterminates and $R$ the subalgebra of the standard-graded polynomial ring $k[X]$ generated by the maximal minors of $X$, where we assume $m \le n$.
Then $b(R)=a(R)=-n$, 
and we have $\omega_R^{-1} = R(n)$.
\end{enumerate}
\end{theorem}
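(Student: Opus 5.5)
The plan treats all five parts uniformly. In each case $R$ is a normal Cohen--Macaulay domain whose divisor class group is known explicitly and has rank at most one. I will therefore identify $\omega_R$ with a specific divisorial ideal, whose class is a known multiple of the class of a prime $\frakp$ or $\frakq$. Then I invert it inside the class group, and read off the least degree of a nonzero element.

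Concretely, for (1) I will use Bruns--Vetter: $\operatorname{Cl}(R)\cong\ZZ$ is generated by $[\frakp]=-[\frakq]$ with $\frakp=\frakp_t(X)$, $\frakq=\frakq_t(X)$. Symbolic powers of these primes agree with ordinary powers. Moreover $\omega_R\cong \frakp^{n-m}$ up to shift, equivalently $\omega_R\cong(\frakq^{n-m})^{-1}$ up to shift. Hence $\omega_R^{-1}\cong \frakq^{(n-m)}=\frakq^{n-m}$, up to a shift which I pin down by matching with $a(R)$. I will take the weighted formula for $a(R)$ from Bruns--Herzog or Conca's computations; in the standard-graded case it is $-nt$. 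The shift is forced by $\omega_R\cong \frakp^{n-m}(a)$ with generators in least degree $-a(R)$. I will check, via the duality $\frakp^{j}\frakq^{j}\subseteq$ a principal ideal generated by a product of $t$-minors, that the shift is exactly $-a(R)$. Then $b(R)=a(R)+(n-m)d$, since $\frakq^{n-m}$ is generated in least degree $(n-m)d$, where $d$ is the degree of the minor on the first $t$ rows and columns, namely $\sum_{i\le t}e_i+\sum_{j\le t}f_j$. Note that $\frakq$ uses rows, so the $f$'s enter. Expanding the weighted $a$-invariant gives the displayed formula.

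For (2) and (4) I will use the class groups computed by Kutz/Goto and by Avramov: for symmetric matrices $\operatorname{Cl}(R)=\ZZ/2$ generated by $[\frakp]$, and Goto's result says $\omega_R$ is free or $\cong\frakp$ according to the parity of $n-t$. Since $2[\frakp]=0$, we have $\frakp^{-1}\cong\frakp$ up to a shift, which gives $\omega_R^{-1}$. The degree bookkeeping is the same as before, and it yields the extra $d$ when $\omega_R$ is not free; dividing by two accounts for the halved degrees. Pfaffian rings are factorial (Avramov), so Remark~\ref{remark:UFD-a-equals-b} applies directly. For (5), the Pl\"ucker algebra is a UFD, the homogeneous coordinate ring of the Grassmannian, with $a(R)=-n$ in its standard grading; again Remark~\ref{remark:UFD-a-equals-b} applies.

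Part (3) is the step I expect to be the main obstacle. For Hankel determinantal rings I will rely on Conca's results: $R$ depends only on $t$ and $m+n-1$, and $\operatorname{Cl}(R)\cong\ZZ$ when $t+1<\min$ of the relevant sizes, generated by $\frakq_t$, with $\omega_R\cong \frakp^{(\cdot)}$. One may reduce to $t+1=m$ by reshaping the Hankel matrix. When $t+1=m=n$ the ring is a hypersurface, so $b=a=-t$. Otherwise I will write $\omega_R$ as a symbolic power of $\frakp$ and invert it to $\frakq^{(n-t-1)}$, shifted by $t$. The subtle point is the claim $b(R)=0$: it requires that the least degree of $\frakq^{(n-t-1)}$ equals $t$ exactly. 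In other words, the symbolic power must contain elements of degree $t$, namely single $t$-minors, rather than only products of $n-t-1$ minors. I would try to prove this using the Hankel identities, which relate minors with shifted row and column index sets, to show that a single $t$-minor from the last rows lies in $\frakq^{(j)}$ for every $j$. This can be checked by localizing at $\frakq$ and computing valuations.
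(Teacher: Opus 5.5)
Your plan for parts (1), (2), (4), (5) is the paper's route: take $\omega_R$ and $a(R)$ from the literature, invert using a principal ideal, and use factoriality in (4) and (5). Two details need tightening.
\begin{enumerate}[\quad\rm(a)]
\item In (1), the containment $\frakp^j\frakq^j\subseteq(\delta^j)$ only gives $\delta^{-j}\frakq^j\subseteq(\frakp^j)^{-1}$. To read off the least degree you need equality, and that is what $\frakp\cap\frakq=(\delta)$ supplies.
\item In (2), ``$\frakp^{-1}\cong\frakp$ up to a shift'' does not by itself produce the extra $d$. You need Goto's result that $\frakp^{(2)}$ is generated by the upper left $t$-minor $\delta$, so that $\frakp^{-1}=\delta^{-1}\frakp\cong\frakp(d)$.
\end{enumerate}

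The genuine gap is part (3), where your structural input is wrong. The class group of a Hankel determinantal ring is not $\ZZ$, and $[\frakq]$ is torsion. After reshaping to a $(t+1)\times N$ Hankel matrix with $N=m+n-t-1$, one has $\frakq^{(N-t+1)}=(\delta)$, where $\delta$ is the upper left $t$-minor. Already for $t=1$, $R$ is the $N$-th Veronese subring of $k[s,u]$ via $x_i=s^{N-i}u^i$, and its class group is $\ZZ/N\ZZ$. There $\frakq=sk[s,u]\cap R$, and the first-column ideal $(x_0,x_1)$ equals $\frakq^{(N-1)}$; it is not a prime complementary to $\frakq$. So the inversion mechanism you borrow from (1) is unavailable. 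As a result, the exponent $n-t-1$ and the shift $t$ in your sketch are copied from the statement rather than derived. Also, $a(R)=-t$ is never established outside the hypersurface case.

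What is needed, and what the paper uses from Conca--Mostafazadehfard--Singh--Varbaro, is:
\begin{enumerate}[\quad\rm(a)]
\item $\omega_R\cong\frakq^{(2)}$ as graded modules;
\item $\frakq^{(N-t+1)}=(\delta)$.
\end{enumerate}
The lemma ``$\frakq^{(i+j)}=(x)$ implies $(\frakq^{(i)})^{-1}=x^{-1}\frakq^{(j)}$'' then gives $\omega_R^{-1}=\delta^{-1}\frakq^{(N-t-1)}\cong\frakq^{(N-t-1)}(t)$.

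With $\delta$ in hand, the point you flag as subtle is immediate. For $1\le j\le N-t+1$, the ideal $\frakq^{(j)}\subseteq\frakq$ has no nonzero elements of degree below $t$. On the other hand, $\delta\in\frakq^{(N-t+1)}\subseteq\frakq^{(j)}$ has degree $t$. Taking $j=N-t-1$ gives $b(R)=0$, and taking $j=2$ gives $a(R)=-t$.

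Your proposed substitute asks for a single $t$-minor ``from the last rows'' lying in $\frakq^{(j)}$ for every $j$. No such element exists: $R_{\frakq}$ is a DVR, so $\bigcap_j\frakq^{(j)}=0$. The right element is $\delta$, and it works only for $j\le N-t+1$, which is all you need.
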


We first recall some basic facts.

\begin{lemma}
Let $\frakp$ and $\frakq$ be distinct height one prime ideals of a normal domain $R$.
\begin{enumerate}[\quad\rm(1)]
\item If $\frakp \cap \frakq = (x)$ for an element $x\in R$,
then $(\frakp^{(n)})^{-1} = \frac{1}{x^n} \frakq^{(n)}$ for all $n \ge 0$.
\item If $\frakp^{(m + n)} = (x)$ for some $m, n \ge 0$ and $x\in R$, then $(\frakp^{(m)})^{-1} = \frac{1}{x} \frakp^{(n)}$.
\end{enumerate}
\end{lemma}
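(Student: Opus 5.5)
The plan is to compute both inverses as fractional ideals inside $Q(R)$, using that a divisorial ideal of a normal domain is determined by its valuations at height one primes. Recall that for a divisorial fractional ideal $\fraka$ one has $\fraka^{-1}=\{f\in Q(R)\mid v_P(f)\ge -v_P(\fraka)\text{ for all height one primes }P\}$. Also, $\frakp^{(n)}=\{f\in R\mid v_\frakp(f)\ge n\}$ is the divisorial ideal with $v_\frakp=n$ and $v_P=0$ elsewhere.

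For (1), the hypothesis $\frakp\cap\frakq=(x)$ says $\divv(x)=\frakp+\frakq$, that is, $v_\frakp(x)=v_\frakq(x)=1$ and $v_P(x)=0$ for all other $P$. I first compute $(\frakp^{(n)})^{-1}$: it is the set of $f$ with $v_\frakp(f)\ge -n$ and $v_P(f)\ge 0$ for $P\neq\frakp$. Write $f=g/x^n$ and translate the conditions on $f$ into conditions on $g$:
\begin{equation*}
v_\frakp(g)\ \ge\ 0,\qquad v_\frakq(g)\ \ge\ n,\qquad v_P(g)\ \ge\ 0\ \text{ for all other } P.
\end{equation*}
By normality, $R=\bigcap_P R_P$, so these conditions say exactly that $g\in\frakq^{(n)}$. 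Hence $(\frakp^{(n)})^{-1}=x^{-n}\frakq^{(n)}$. The inclusion $x^{-n}\frakq^{(n)}\subseteq(\frakp^{(n)})^{-1}$ can also be checked directly, since $\frakp^{(n)}\frakq^{(n)}\subseteq(\frakp\cap\frakq)^{(n)}=(x^n)$ by the valuation description.

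For (2), the hypothesis $\frakp^{(m+n)}=(x)$ gives $\divv(x)=(m+n)\frakp$. Again writing $f=g/x$, the condition $f\in(\frakp^{(m)})^{-1}$ translates to $v_\frakp(g)\ge n$ and $v_P(g)\ge 0$ for all other $P$. This is exactly $g\in\frakp^{(n)}$.

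The only point needing care is justifying $\divv(x)=\frakp+\frakq$ from the equality of ideals $\frakp\cap\frakq=(x)$. The ideal $(x)$ is principal, hence unmixed of height one. Its associated primes are therefore among the minimal primes of $\frakp\cap\frakq$, which are $\frakp$ and $\frakq$. Localizing at $\frakp$ gives $xR_\frakp=\frakp R_\frakp$ because $\frakq\not\subseteq\frakp$, so $v_\frakp(x)=1$, and symmetrically $v_\frakq(x)=1$. The analogous argument handles (2): the only associated prime of $(x)$ is $\frakp$, and $v_\frakp(x)=m+n$ because $\frakp^{(m+n)}R_\frakp=\frakp^{m+n}R_\frakp$.
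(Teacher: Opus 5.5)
Your proposal is correct and takes essentially the same route as the paper. The paper's proof is one line: all ideals involved are divisorial, so the equalities can be checked after localizing at height one primes, where $R_P$ is a DVR. Your argument spells this out: you compute valuations, use $R=\bigcap_P R_P$, and verify that $\divv(x)=\frakp+\frakq$ (resp.\ $(m+n)\frakp$).
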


\begin{proof}
As all ideals involved are divisorial, the assertions may be checked after localizing at height one prime ideals, i.e., over a DVR.
\end{proof}

\begin{lemma}
For graded modules $M$ and $N$ over a graded ring $R$ and an integer $n$, one has degree-preserving isomorphisms
\begin{equation*}
\pushQED{\qed}
\Hom_R(M(-n), N)\ \cong\ \Hom_R(M, N)(n)\ \cong\ \Hom_R(M, N(n)).
\qedhere \popQED
\end{equation*}
\end{lemma}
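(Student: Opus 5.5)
The plan is to unwind the definitions. We work with the graded module $M(-n)$, whose components are ${[M(-n)]}_j = {[M]}_{j-n}$. The rings and modules are the same throughout; only the gradings on the $\Hom$ modules need comparing. Recall that $\Hom_R(M,N)$ is graded with ${[\Hom_R(M,N)]}_i$ the set of $R$-linear maps $\varphi\colon M\to N$ satisfying $\varphi({[M]}_j)\subseteq {[N]}_{i+j}$ for all $j$. For finitely generated $M$ this is all of $\Hom_R(M,N)$; in general one takes the graded $\Hom$. In either case the argument is componentwise.

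For the first isomorphism, take the identity map on underlying sets of $R$-linear maps. An $R$-linear map $\varphi\colon M(-n)\to N$ is the same thing as an $R$-linear map $M\to N$. We then check degrees. The map $\varphi$ lies in ${[\Hom_R(M(-n),N)]}_i$ exactly when $\varphi({[M(-n)]}_j)\subseteq {[N]}_{i+j}$ for all $j$, that is, when $\varphi({[M]}_{j-n})\subseteq {[N]}_{i+j}$ for all $j$. Reindexing with $j'=j-n$, this says $\varphi({[M]}_{j'})\subseteq {[N]}_{(i+n)+j'}$. So $\varphi$ lies in ${[\Hom_R(M,N)]}_{i+n}={[\Hom_R(M,N)(n)]}_i$. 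Hence the identity map is a degree-preserving bijection, and it is $R$-linear.

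The second isomorphism is analogous. The map $\varphi$ lies in ${[\Hom_R(M,N(n))]}_i$ exactly when $\varphi({[M]}_j)\subseteq {[N(n)]}_{i+j}={[N]}_{i+j+n}$ for all $j$. This is again membership in ${[\Hom_R(M,N)]}_{i+n}$. The only point needing any care is keeping the sign convention ${[M(i)]}_j={[M]}_{i+j}$ straight; there is no real obstacle.
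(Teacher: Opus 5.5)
Your verification is correct. With the conventions ${[M(i)]}_j={[M]}_{i+j}$ and ${[\Hom_R(M,N)]}_i$ consisting of $R$-linear maps that raise degree by $i$, the identity on underlying maps identifies the degree-$i$ components of all three modules with ${[\Hom_R(M,N)]}_{i+n}$. The paper states this lemma without proof, treating it as routine, and your unwinding of the definitions is exactly the intended argument.
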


\begin{proof}[Proof of Theorem~\ref{theorem:determinantal}]
(1) Let $\frakp$ and $\frakq$ respectively denote the prime ideals $\frakp_t(X)$ and~$\frakq_t(X)$,
and let $\delta$ denote the upper left $t$-minor of $X$.
Then $\deg \delta = d$, and $\delta$ is an element of minimal degree in $\frakp$ and in $\frakq$.
The $a$-invariant and the graded canonical module of $R$ are computed in \cite[Corollaries~1.5 and~1.6]{BH:ainv}.
We~obtain
\begin{equation*}
\omega_R \ =\ \frakp^{(n-m)}(-s),
\end{equation*}
where $s=-a(R)-d(n-m)$.
The symbolic and ordinary powers of $\frakq$ coincide, see \cite[Corollary~7.10]{BV}, and we have $\frakp \cap \frakq = (\delta)$, giving us
\begin{equation*}
\omega_R^{-1}\ =\ \frac{1}{\delta^{n-m}}\frakq^{n-m}(s).
\end{equation*}
Thus, $\omega_R^{-1}$ is generated by elements of degree $-s=a(R)+d(n-m)$.

(2) The ring~$R$ has class group $\ZZ/2$ by \cite{Goto1, Goto2}, and is Gorenstein precisely when one has $n\not\equiv t\mod 2$.
Let $\frakp$ denote the ideal $\frakp_t(X)$;
then $\frakp$ is a height one prime by~\cite[Theorem~1]{Kutz}, and generates the divisor class group of~$R$,
with $\frakp^{(2)}$ being generated by the determinant of the upper left $t\times t$ submatrix of $X$, see~\cite{Goto1}.
It follows that the minimal degree of an element in $\frakp$ is $d$, and that
\begin{equation*}
\frakp^{-1}\ =\ \frakp(d).
\end{equation*}
As $\omega_R$ must be isomorphic to $\frakp$ in the non-Gorenstein case, we obtain
\begin{equation*}
\omega_R\ =\
\begin{cases}
\frakp\left(a(R)+d\right) & \text{ if } n\equiv t\mod 2,\\
R\left(a(R)\right) & \text{ if } n\not\equiv t\mod 2.
\end{cases}
\end{equation*}
It follows that $a(R)=b(R)$ if $n\not\equiv t\mod 2$;
this is the case where the ring $R$ is Gorenstein. If, instead, $n\equiv t\mod 2$, then one has
\begin{equation*}
\omega_R^{-1}\ =\ \frakp^{-1}(-a(R)-d)\ =\ \frakp(-a(R)),
\end{equation*}
which is generated in degree $-a(R)+d$.
The calculation of the $a$-invariant may now be found in \cite[Theorem~4.4]{Barile:a:inv} or~\cite[Theorem~2.4]{Conca:symmetric:ladders}.

(3) The ideal $I_{t+1}(X)$ remains unchanged if we rearrange the indeterminates so as to form a $(t+1) \times N$ Hankel matrix $X$, where $N \colonequals n+m-t-1$;
c'est un r\'esultat de Gruson et Peskine \cite[Lemme~2.3]{Gruson-Peskine}.
Note that $t+1=N$ holds precisely when $t+1=n=m$.
Set~$\frakq$ to be the ideal $\frakq_t(X)$.
Summarizing the relevant results from \cite[Theorem~3.1]{CMSV},
we have $\omega_R \cong \frakq^{(2)}$ as graded modules,
$\frakq^{(N-t+1)} = (\delta)$ with $\deg \delta = t$,
and $\frakq^{(k)}$ is generated in degree $t$ for all $1 \le k \le N-t+1$.

When $t+1=N$, i.e., $R$ is a hypersurface, this recovers the obvious, namely $\omega_R=R(-t)$. When $t+1<N$, one has
\begin{equation*}
\omega_R^{-1}\ =\ \frakq^{(N-t-1)}(t),
\end{equation*}
so $b(R)=0$.

(4) and (5) The respective $a$-invariants are computed in \cite[Corollaries~1.7 and~1.4]{BH:ainv}.
In view of Remark~\ref{remark:UFD-a-equals-b}, it suffices to note that $R$ is a UFD in each case;
see \cite{Avramov} and \cite[Proposition~8.5]{Samuel} for the respective cases.
\end{proof}

\appendix
\section{Inverses of semiinvariants}

We record some results that are possibly known to experts, but for which we could not find the desired form in the literature.
As in Section~\ref{section:invariants}, fix a field $k$, and a finite subgroup~$G$ of $\GL_n(k)$ with its natural action on $S \colonequals k[x_1,\dots,x_n]$.
Let $\mu \colon G \to k^{\times}$ be a character.
As~$S$ is a UFD, we may define
\begin{equation*}
f_{\mu} \colonequals \gcd(S^G_{\mu}) \in S,
\end{equation*}
which is well-defined up to a unit.
Because $S^G_{\mu}$ is $G$-stable, the element $f_{\mu}$ is a semiinvariant for some eigencharacter $\kappa_{\mu}$.
Setting $\lambda_{\mu} \colonequals \mu/\kappa_{\mu}$, it is a straightforward check that
\begin{equation}
\label{equation:factor-out-gcd}
S^G_{\mu}\ =\ f_{\mu} S^G_{\lambda_{\mu}}.
\end{equation}

\begin{theorem}
\label{theorem:inverses-semiinvariants}
Let $k$ be a field, $G$ a finite subgroup of $\GL_n(k)$ acting on $S \colonequals k[x_1,\dots,x_n]$,
and $\mu \colon G \to k^{\times}$ a character.
Let $\lambda \colonequals \lambda_{\mu}$ and $f_{\mu}$ be as defined above.
The character $\lambda$ is trivial on pseudoreflections.
As graded $S^G$-modules, we have
\begin{equation*}
{(S^G_{\lambda})}^{-1}\ =\ S^G_{\lambda^{-1}},
\quad \text{and} \quad
{(S^G_{\mu})}^{-1}\ \cong\ S^G_{\lambda^{-1}}(\deg f_{\mu}).
\end{equation*}
If $\mu$ is trivial on pseudoreflections, then $\lambda = \mu$.
\end{theorem}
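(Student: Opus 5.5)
The plan is to first show that $S^G_\lambda$ has trivial gcd, and get everything else from that. From \eqref{equation:factor-out-gcd} we have $S^G_\mu = f_\mu S^G_\lambda$ with $f_\mu=\gcd(S^G_\mu)$. Hence $\gcd(S^G_\lambda)=1$ up to a unit, which I will call $(\ast)$.

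\emph{Step 1: $\lambda$ is trivial on pseudoreflections.} Let $g\in G$ be a pseudoreflection. Since $1-g$ has rank one on $V={[S]}_1$, its image is spanned by a single linear form $\ell$, so $g(x_i)-x_i\in k\ell$ for every $i$. In particular $g(\ell)\in k\ell$, so $g$ preserves $\ell S$. The induced automorphism of $S/\ell S$ fixes each generator, so it is the identity; that is, $g(s)-s\in \ell S$ for all $s\in S$.

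Now suppose $\lambda(g)\neq1$. For $s\in S^G_\lambda$ we get $(\lambda(g)-1)s=g(s)-s\in\ell S$, so $\ell$ divides every element of $S^G_\lambda$. This contradicts $(\ast)$, so $\lambda(g)=1$.

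\emph{Step 2: the inverses.} The inclusion $S^G_{\lambda^{-1}}\subseteq (S^G_\lambda)^{-1}$ is clear. For the reverse inclusion, take $f\in L$ with $fS^G_\lambda\subseteq S^G$, and write $f=a/b$ in lowest terms in the UFD $S$. Since $as/b\in S$ for every $s\in S^G_\lambda$, the element $b$ divides every such $s$. By $(\ast)$, $b$ is a unit, so $f\in S$.

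Next fix a nonzero $s\in S^G_\lambda$ and put $t\colonequals fs\in S^G$. Then
\begin{equation*}
g(f)\ =\ \frac{t}{\lambda(g)s}\ =\ \lambda(g)^{-1}f,
\end{equation*}
so $f\in S^G_{\lambda^{-1}}$. This proves ${(S^G_\lambda)}^{-1}=S^G_{\lambda^{-1}}$. For $\mu$ itself,
\begin{equation*}
{(S^G_\mu)}^{-1}\ =\ {(f_\mu S^G_\lambda)}^{-1}\ =\ f_\mu^{-1}S^G_{\lambda^{-1}}\ \cong\ S^G_{\lambda^{-1}}(\deg f_\mu),
\end{equation*}
where the isomorphism is multiplication by $f_\mu$, which raises degrees by $\deg f_\mu$.

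\emph{Step 3: if $\mu$ is trivial on pseudoreflections then $\lambda=\mu$.} I expect this step to be the main obstacle. It suffices to show that $f_\mu$ is a unit, i.e., that no prime $p\in S$ divides all of $S^G_\mu$.

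Fix such a prime $p$ and set $\frakp\colonequals pS\cap S^G$. Localize at $\frakp$: the ring $T\colonequals (S^G)_\frakp$ is a DVR, and $S_\frakp$ is a semilocal Dedekind domain with a $G$-action. Let $I\subseteq G$ be the inertia group of $pS$. An element of $I$ acts trivially on $S/pS$, which has dimension $n-1$. The standard argument (as in \cite[\S3.10]{Benson}, or \cite{Nakajima}) then shows that each element of $I$ is the identity or a pseudoreflection whose hyperplane is $p$, which forces $p$ to be linear whenever $I\neq1$. By hypothesis, $\mu$ is therefore trivial on $I$.

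The plan is then to show, by Galois descent, that $(S_\frakp)^G_\mu$ contains an element which is a unit at $pS$. First pass to the strict henselization, or simply to $S_{pS}$ with the decomposition group $D$. Over the inertia invariants $S^I$ the extension is unramified at $p$. Because $\mu$ is trivial on $I$, $\mu$ factors through $D/I$, and the normal basis theorem for the residue field extension gives a $\mu$-semiinvariant that is a unit at $p$. An averaging argument over the coset representatives of $G/D$ then transports this to an element of $S^G_\mu$ not divisible by $p$.

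Since $S^G_\mu$ is reflexive of rank one, \cite[Lemma~2.1]{Nakajima}, this local statement at every height-one prime gives $\gcd(S^G_\mu)=1$. Hence $\kappa_\mu$ is trivial and $\lambda=\mu$. If the descent argument becomes messy, the fallback is to cite the classical fact that $S^G_\mu$ is generated in codimension one by $\mu$-twisted traces, and to check nonvanishing modulo $p$ directly using the triviality of $\mu$ on $I$.
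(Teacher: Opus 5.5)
Your Steps~1 and~2 are the paper's argument; you even fill in the two points it leaves implicit, namely $(1-g)(S)\subseteq \ell S$ and $f\in L^G_{\lambda^{-1}}$. So everything before the last sentence of the statement is fine.

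The gap is Step~3. The paper does not argue this from scratch. It reads it off from the Nakajima--Broer formula $f_\mu=\prod_i \ell_i^{n_i}$ of Remark~\ref{remark:description-theta-f}, where $n_i$ is least such that $\mu\det^{n_i}$ is trivial on the pointwise stabilizer $H_i$. Since $H_i\smallsetminus\{1\}$ consists of pseudoreflections, every $n_i=0$. Your descent sketch does not establish this over an arbitrary field, and the theorem allows modular actions. Three concrete problems:
\begin{enumerate}[\quad(i)]
\item If you lift a residue-field $\mu$-semiinvariant $\bar u$ to $u$ and average, then $\sum_h\mu(h)^{-1}h(u)$ reduces to $\md{D}\,\bar u$ (or to $\md{D/I}\,\bar u$ if you work in $S^I$). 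This is zero whenever $\chr k$ divides that order.
\item The transfer over $G/D$ cannot be carried out in $S_{pS}$ or a henselization, because $G$ does not act there. Even in $S_\frakp$, the summands $\sigma(u)$ with $\sigma\notin D$ need not lie in $P\colonequals pS$ unless you have arranged $u\in\sigma^{-1}(P)$. Otherwise they can cancel the unit term modulo $P$.
\item Your fallback is false in the modular case. Take $\chr k=r>0$ and $G$ the transvections $x_2\mapsto x_2+ax_1$, $a\in\mathbb{F}_r$. Every $g\in G$ acts trivially on $S/x_1S$, so every trace is $\equiv \md{G}f=0$ modulo $x_1$, although $1\in S^G$.
\end{enumerate}

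Your route can be repaired without dividing by any group order. Let $B\colonequals (S^I)_\frakp$, on which $D/I$ acts, and let $F$ be its residue field at $P\cap S^I$.
\begin{enumerate}[\quad(1)]
\item $\kappa(P)$ is purely inseparable over $F$ (look at $\prod_{h\in I}(T-h(x))$), so $D/I$ acts faithfully on $F$. Pick a normal basis generator $\bar b$ of $F$ over $F^{D/I}$.
\item By the Chinese remainder theorem, lift $\bar b$ to $b\in B$ lying in every other maximal ideal of $B$. $D$ fixes $P\cap S^I$, so it permutes these other maximal ideals.
\item Put $c\colonequals\sum_{h\in D/I}\mu(h)^{-1}h(b)$. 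It is a $\mu$-semiinvariant for $D$, since $\mu|_I=1$. Its residue $\sum_h\mu(h)^{-1}h(\bar b)$ is nonzero by linear independence of the conjugates. Also $c\in\sigma(P)S_\frakp$ for every $\sigma\notin D$, since $\sigma(P)\cap S^I\neq P\cap S^I$.
\item Then $w\colonequals\sum_{\sigma\in G/D}\mu(\sigma)^{-1}\sigma(c)\in (S_\frakp)^G_\mu$ satisfies $w\equiv c\not\equiv 0$ modulo $P$.
\item Clearing an invariant denominator gives an element of $S^G_\mu$ not divisible by $p$.
\end{enumerate}
This yields a self-contained proof of the special case of Nakajima's result that the paper cites; citing it, as the paper does, is much shorter.
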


\begin{proof}
Let $g$ be a pseudoreflection and fix a linear form $\ell \in S$ such that $(1 - g)(S) \subseteq \ell S$.
For $s \in S^G_{\lambda}$, we thus get $(1 - \lambda(g))s \in \ell S$.
If $\lambda(g) \neq 1$, then $\ell$ divides each element of $S^G_{\lambda}$, contradicting the fact that $\gcd(S^G_{\lambda})=1$.
Thus, $\lambda$ is trivial on pseudoreflections.

We prove ${(S^G_{\lambda})}^{-1}\subseteq S^G_{\lambda^{-1}}$; the other containment holds in general.
Given $f \in {(S^G_{\lambda})}^{-1}$, it is clear that $f \in L^G_{\lambda^{-1}}$, so we only need to show that $f \in S$.
Writing $f = a/b$ with $a$ and $b$ coprime, we note that
\begin{equation*}
a S^G_{\lambda}\ \subseteq\ b S^G\ \subseteq\ b S,
\end{equation*}
so $b$ divides each element of $S^G_{\lambda}$.
As $\gcd(S^G_{\lambda})=1$, we conclude that $b$ is a unit.
The isomorphism in the display is now a consequence of~\eqref{equation:factor-out-gcd}.

The last statement follows from the description of $f_{\mu}$ in Remark~\ref{remark:description-theta-f} below.
\end{proof}

\begin{remark}
\label{remark:interpret-inverse}
Here is how one can interpret the theorem:
the na\"ive equality ${(S^G_{\mu})}^{-1} = S^G_{\mu^{-1}}$ holds when the gcd, namely $f_{\mu}$, is a unit, equivalently, when $\mu$ is trivial on pseudoreflections.
Once we factor out the gcd, we can write $S^G_{\mu} = f_{\mu} S^G_{\lambda}$ with $\lambda$ trivial on pseudoreflections and obtain
${(S^G_{\mu})}^{-1} = f_{\mu}^{-1} S^G_{\lambda^{-1}}$.
\end{remark}

\begin{remark}
\label{remark:description-theta-f}
We recall the descriptions of $\theta$ and $f_{\mu}$ from \cite[Proposition~9]{Broer} and \cite[Proposition~2.7]{Nakajima}:
Let $W_1,\dots,W_s$ be the distinct hyperplanes that arise as fixed points of elements of $G$.
For each such hyperplane $W_i$, fix a nonzero linear form $\ell_i \in S$ that vanishes on $W_i$.
Let $p$ be the characteristic of $k$ if this is positive, else let $p = 1$.
Let $H_i \subseteq G$ be the pointwise stabilizer of $W_i$, and write $\md{H_i} \colonequals e_i p^{a_i}$ with $e_i$ coprime to $p$.

Let $P_i$ be the subgroup of $H_i$ generated by the transvections.
Define $q_i \colonequals -a(S^{P_i}) - n$,
i.e., $n + q_i$ is the sum of the degrees of the generators of the polynomial ring $S^{P_i}$.
Then $\theta$ is the product $\prod \ell_i^{e_i + q_i - 1}$.

If $\mu \colon G \to k^{\times}$ is a character, and $n_i$ the smallest nonnegative integer such that the character $\mu \det^{n_i}$ is trivial on $H_i$,
then $f_{\mu}$ is the product $\prod \ell_i^{n_i}$.
This formulation may also be found in \cite[\S3]{Broer}, though it is not explicitly stated that $f_{\mu}$ coincides with the gcd.
\end{remark}

\begin{lemma}
\label{lemma:det-chi-pseudo}
Let $\chi$ be the eigencharacter of $\theta$ as earlier.
The character $\det/\chi$ is trivial on pseudoreflections.
\end{lemma}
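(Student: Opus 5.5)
Fix a pseudoreflection $g\in G$ with fixed hyperplane $W$ and linear form $\ell$ vanishing on $W$. The plan is to show $\det(g)=\chi(g)$ by computing how $g$ acts on $\theta$ through the explicit factorization in Remark~\ref{remark:description-theta-f}. Since $g$ fixes $W$ pointwise, it lies in $H_i$ for the index $i$ with $W=W_i$. Every other hyperplane $W_j$ is permuted among the fixed hyperplanes by $G$, but $g$ need not fix each $\ell_j$.

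A cleaner route avoids tracking that permutation. Write $\theta=\ell^{c}\theta'$ with $c=e_i+q_i-1$ and $\ell\nmid\theta'$. Since $g(\theta)=\chi(g)\theta$, and $(1-g)(S)\subseteq\ell S$, we get $g(s)\equiv s \mod \ell$ for all $s\in S$. Also $g(\ell)=\det(g)^{\pm1}\ell$; with the paper's convention, $g(\ell)=\det(g)^{-1}\ell$ or $\det(g)\ell$, which needs checking. For a diagonalizable pseudoreflection, $\ell$ is the eigenvector with the nontrivial eigenvalue, so $g(\ell)=\varepsilon\ell$ with $\varepsilon$ equal to $\det g$ or its inverse. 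For a transvection, $\det g=1$ and $g(\ell)=\ell$.

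Next compare the two sides of $g(\theta)=\chi(g)\theta$:
\[
\chi(g)\,\ell^c\theta' \;=\; g(\ell)^c\,g(\theta') \;=\; \varepsilon^c\ell^c g(\theta').
\]
This gives $g(\theta')=\chi(g)\varepsilon^{-c}\theta'$. Reducing modulo $\ell$, and using $g(\theta')\equiv\theta'$ with $\theta'\not\equiv0$, yields $\chi(g)=\varepsilon^{c}$.

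It remains to show $\varepsilon^{c}=\det(g)$. In the transvection case both sides are $1$. In the diagonalizable case $\det g$ is a root of unity whose order divides $e_i$, since the diagonalizable part of $H_i$ is cyclic of order $e_i$ modulo the normal $p$-subgroup, and $p$-elements have trivial determinant in characteristic $p$. So it suffices to have $c\equiv -1$ or $c\equiv 1 \mod e_i$, according to the sign convention. This is exactly where $e_i+q_i-1$ enters: we need $q_i\equiv0\mod e_i$ when the exponent is negative. That is plausible because $P_i$ is normalized by $H_i$, and the cyclic group $H_i/P_i$ of order $e_i$ acts on the generators of $S^{P_i}$. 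The main obstacle is pinning down the sign convention relating $g(\ell)$ to $\det g$ under the action $x\mapsto g^{-1}x$, together with the congruence on $q_i$. I would first check the convention on a diagonal reflection $\diag(\zeta,1,\dots,1)$.

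A fallback that avoids $q_i$ entirely: $S^G\subseteq S^{H_i}$, and the different is multiplicative in towers, $\mathcal D_{S/S^G}=\mathcal D_{S/S^{H_i}}\mathcal D_{S^{H_i}/S^G}S$. The localization at $\ell$ of the second factor is generated by an element invariant under $g$. So $\chi(g)$ can be computed for the pointwise stabilizer $H_i$ alone. There it reduces, via Theorem~\ref{theorem:canonical-det} and Broer's criterion, to the statement that $S_{(\ell)}^{H_i}$-type local rings are regular, hence Gorenstein, so that $\det=\chi$ on $H_i$. I would present the direct computation and keep this reduction as backup if the congruence is troublesome.
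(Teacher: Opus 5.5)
You correctly reduce the problem to a congruence, but you never prove that congruence. The congruence is the real content of the lemma along your route, so as written the diagonalizable case is not proved. Your fallback is essentially the paper's idea, but its last step does not work.

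\textbf{The main line.} The first part is sound. From $\theta=\ell^{c}\theta'$ with $\ell\nmid\theta'$ and $(1-g)(S)\subseteq\ell S$ you correctly get $\chi(g)=\varepsilon^{c}$. Transvections are handled, and $\det(g)^{e_i}=1$ is fine.

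The sign is easy to settle and matters. Under $x\mapsto g^{-1}x$ one has $g(\ell)=\det(g)^{-1}\ell$; for instance $g(x_1)=\zeta^{-1}x_1$ when $g=\diag(\zeta,1,\dots,1)$. So with $c=e_i+q_i-1$ you need exactly $\det(g)^{q_i}=1$. With the opposite sign you would need $q_i\equiv 2$ modulo the order of $\det g$. That already fails when $H_i=\langle g\rangle$ is cyclic of order $e_i\ge 3$, where $q_i=0$.

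You label $\det(g)^{q_i}=1$ ``plausible.'' It is true, but proving it needs real input about $S^{P_i}$ that the proposal does not supply. One way: show that $S^{P_i}$ is a polynomial ring whose generators can be chosen to be $g$-eigenvectors, with one of them $\ell$ and the others not divisible by $\ell$. By your own valuation argument the others are then $g$-fixed. The chain rule then shows that the Jacobian, which lies in $k^{\times}\ell^{q_i}$, is $g$-invariant, and this forces $\det(g)^{q_i}=1$.

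\textbf{The fallback.} The reduction to $H_i$ via transitivity of the different is fine. The final step fails: every normal ring is regular at its height-one primes, so regularity of ``$S^{H_i}_{(\ell)}$-type'' local rings carries no information. Broer's criterion needs $S^{H_i}$ itself to be quasi-Gorenstein. That would follow from $S^{H_i}$ being a polynomial ring, which is a nontrivial fact in the modular case and which you do not invoke.

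\textbf{How the paper avoids this.} It uses $H=\langle g\rangle$ instead of $H_i$. Write $g=\diag(\zeta,1,\dots,1)$ with $\zeta$ of order $d$.
\begin{enumerate}
\item The ring $S^H=k[x_1^d,x_2,\dots,x_n]$ is visibly a polynomial ring, with $\mathcal{D}_{S/S^H}=x_1^{d-1}S$.
\item Since $S^H$ is a UFD, $\mathcal{D}_{S^H/S^G}$ is generated by some $\theta_{G/H}\in S^H$.
\item Transitivity of the different gives $\theta=x_1^{d-1}\theta_{G/H}$.
\item Since $g$ fixes $\theta_{G/H}$, one gets $\chi(g)=\zeta^{-(d-1)}=\zeta=\det g$.
\end{enumerate}
Transvections are dispatched by noting that they have order $p$, so every character is trivial on them. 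This argument never uses $e_i$, $q_i$, or the explicit formula for $\theta$.
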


\begin{proof}
Since a transvections has order $p$, each character takes value $1$ on a transvection.
Suppose $g$ is a diagonalizable pseudoreflection, set $H \colonequals \langle g \rangle$.
After a change of coordinates, we may assume that $g$ is of the form $\diag(\zeta, 1, \dots, 1)$ with $\zeta$ a primitive $d$-th root of unity.
Then $S^H = k[x_1^d, x_2,\dots,x_n]$, with the corresponding Dedekind different $\calD_{S/S^H}$ generated by
$\theta_H = x_1^{d-1}$.
As $S^H$ is a UFD, the Dedekind different $\calD_{S^H/S^G}$ is a principal ideal;
fix a generator $\theta_{G/H} \in S^H$.
Transitivity of the different \cite[Lemma~3.10.1]{Benson} yields
$\theta = \theta_H \cdot \theta_{G/H}$.
Using this, we conclude that
\begin{equation*}
\chi(g)\ =\ \frac{g(\theta)}{\theta}\ =\ \frac{g(\theta_H)}{\theta_H}\ =\ \frac{1}{\zeta^{d - 1}}\ =\ \zeta\ =\ \det(g).
\qedhere
\end{equation*}
\end{proof}

\section*{Acknowledgments}

Some of the examples were verified using the computer algebra system \texttt{Magma} \cite{Magma}.


\end{document}